\let\epsilon\varepsilon
\theoremstyle{plain}
\newtheorem{theorem}{Theorem}[section]
\newtheorem{corollary}[theorem]{Corollary}
\newtheorem{lemma}[theorem]{Lemma}
\theoremstyle{definition}
\theoremstyle{remark}
\numberwithin{equation}{section}
\newcommand{\ds}{\displaystyle}
\newcommand{\fn}{\ignorespaces\,}
\newcommand{\half}{\tfrac{1}{2}}
\newcommand{\lip}{\left\langle}
\newcommand{\rip}{\right\rangle}
\newcommand{\lnorm}{\left\Vert}
\newcommand{\rnorm}{\right\Vert}
\newcommand{\Bigglpar}{\Biggl( }
\newcommand{\Biggrpar}{\Biggr) }
\newcommand{\lset}{\left\lbrace}
\newcommand{\rset}{\right\rbrace}
 \newcommand{\labs}{\left\vert}
\newcommand{\rabs}{\right\vert}
\newcommand{\biglabs}{\bigl\vert}
\newcommand{\bigrabs}{\bigr\vert}
\newcommand{\lpar}{\left(}
\newcommand{\rpar}{\right)}
\newcommand{\group}[1]{\mathrm{#1}}
\newcommand{\C}{\mathbb{C}}
\newcommand{\R}{\mathbb{R}}
\newcommand{\T}{\mathbb{T}}
\newcommand{\Hilb}{\mathcal{H}}
\newcommand{\vect}{\mathcal{V}}
\newcommand{\ub}{\mathrm{ub}}
\newcommand{\op}{\mathrm{op}}
\newcommand{\cosec}{\operatorname{cosec}}
\newcommand{\sgn}{\operatorname{sgn}}
\renewcommand{\Re}{\operatorname{Re}}
\renewcommand{\Im}{\operatorname{Im}}
\newcommand\la{\lambda}
\newcommand\Rel{\alpha}
\newcommand\Iml{\beta}
\newcommand\quoz{q}
\newcommand\altfootnote[1]{{}}
\newcommand\footnoteomit[1]{{}}
\newcommand\Hilbab{{\Hilb_{\Rel,a,b}}}
\newcommand\Hilbnorm[2]{\lnorm #1 \rnorm_{\Hilb_{#2}}}
\newcommand\base{{\mathbf{b}}}
\begin{document}
\title{Uniformly bounded representations of $\group{SL}(2,\R)$}
\author{Francesca Astengo}
\address
{Dipartimento di Matematica, Universit\`a di Genova,
16146 Genova, Italia}
\email{astengo@dima.unige.it}

\author{Michael G. Cowling}
\address
{School of Mathematics, University of New South Wales,
UNSW Sydney 2052, Australia}
\email{m.cowling@unsw.edu.au}

\author{Bianca Di Blasio}
\address
{Dipartimento di Matematica e Applicazioni\\
Universit\`a di Milano Bicocca \\
Via Cozzi 53\\
20125 Milano\\ Italia}
\email{bianca.diblasio@unimib.it}

\begin{abstract}
We compute the ``norm'' of irreducible uniformly bounded representations of $\group{SL}(2,\R)$.
We show that the Kunze--Stein version of the uniformly bounded representations has minimal norm in the similarity class of uniformly bounded representations.
\end{abstract}

\maketitle
\section{Introduction}
We begin by summarising our results briefly.
A representation $\pi$, by which we always mean a continuous representation of a locally compact group $G$ on a Hilbert space $\Hilb_\pi$, is said to be \emph{uniformly bounded} if $\pi(x)$ is a bounded operator on $\Hilb_\pi$ for each $x \in G$, and there is a constant $C$, necessarily no less than $1$, such that
\begin{equation}\label{eq:def-ub}
C^{-1}  \lnorm\xi\rnorm_{\Hilb_\pi}  \leq \lnorm \pi(x) \xi \rnorm_{\Hilb_\pi} \leq C \lnorm\xi\rnorm_{\Hilb_\pi}
\qquad\forall x \in G \quad\forall \xi \in \Hilb_\pi;
\end{equation}
the two inequalities are equivalent because $\pi$ is a representation.
We write $\| \pi(x) \|_{\op}$ for the operator norm of $\pi(x)$ and define the \emph{norm} of $\pi$, written $\| \pi\|_{\ub}$, to be the smallest possible value of $C$ in this inequality.

Suppose that $\pi$ and $\sigma$ are uniformly bounded representations of $G$.
A linear operator from $\Hilb_\pi$ to $\Hilb_\sigma$ such that $\sigma(x) T = T \pi(x)$ for all $x \in G$ is called an intertwiner.
We say that $\pi$ and $\sigma$ are \emph{similar} if there is an intertwiner that is bounded with bounded inverse, and \emph{unitarily equivalent} if there is a unitary intertwiner.
Similarity and unitary equivalence are equivalence relations.
Similar uniformly bounded representations may have different norms and hence not be unitarily equivalent.
In general, little seems to be known about similarity classes of uniformly bounded representations, or about finding uniformly bounded representations in an equivalence class with minimal norm.
Of course, if a uniformly bounded representation is similar to a unitary representation, then the unitary representation has minimal norm in the equivalence class.

In 1955, L.~Ehrenpreis and F.~Mautner \cite{EhrMau1, EhrMau2} showed that $\group{SL}(2, \R)$ has two analytic families of representations $\pi_{\lambda, \epsilon}$, where $\lambda \in \C$ and $\epsilon$ is either $0$ or $1$.  
These representations have bounded $K$-finite matrix coefficients  (here $K$ is $\group{SO}(2)$) if and only if $|\Re(\lambda)| \leq \half$, and they are uniformly bounded when $| \Re(\lambda) |< \half$; most of them are not similar to unitary representations.
Shortly after, R.A.~Kunze and E.M.~Stein \cite{KunStn1} found a use for these uniformly bounded representations, first realising them on the same Hilbert space, and then using them to prove what is now called the Kunze--Stein phenomenon for $\group{SL}(2,\R)$.

We will define families of Hilbert spaces $\Hilb_{\Rel}$ and $\Hilb_{\Rel,a,b}$, where $\alpha \in (-\frac{1}{2}, \frac{1}{2})$ and $a,b \in \R^+$; the spaces $\Hilb_{\Rel}$ are homogeneous fractional Sobolev spaces, and $\Hilb_{\Rel,a,b}$ and $\Hilb_{\Rel}$ have equivalent norms, so they coincide as spaces of (generalised) functions.
The Kunze--Stein uniformly bounded representations $\pi_{\lambda, \epsilon}$ act on the spaces $\Hilb_{\Rel}$, where $\Rel = \Re\lambda$; the same representation, but with the Hilbert space equipped with the $\Hilb_{\Rel,a,b}$ norm, will be denoted by $\pi_{\lambda, \epsilon, a,b}$.

Our  main theorem is about this family of representations: it shows that the Kunze--Stein representations are optimal, in the sense of having minimal norms, and gives sharp estimates for these norms.

\begin{theorem}
Suppose that $\sigma$ is a uniformly bounded representation of $\group{SL}(2, \R)$ that is similar to $\pi_{\lambda,\epsilon}$, where $| \Re \lambda | < \half$ and $\epsilon \in \{0,1\}$.
Then there exists an equivalent Hilbert norm on $\Hilb_\sigma$ such that $\tau$, the representation $\sigma$ acting on the equivalent Hilbert space, is unitary on the subgroup of lower triangular matrices, and
\[
\|  \sigma \|_{\ub} \geq \| \tau \|_{\ub}.
\]
Further, there exist $a,b \in \R^+$ such that $\tau$ is unitarily equivalent to $\pi_{\lambda, \epsilon, a,b}$ and
\[
\| \tau \|_{\ub} = \| \pi_{\lambda, \epsilon,a,b} \|_{\ub} .
\]
Further, when $a\neq b$ and $(\lambda, \epsilon) \neq (0,1)$,
\[
\| \pi_{\lambda, \epsilon,a,b} \|_{\ub}  > \| \pi_{\lambda, \epsilon} \|_{\ub}
\]
and, when $ |\Im\la | $ is large,
\[
\| \pi_{\lambda, \epsilon} \|_{\ub}
\simeq \frac{(1+|\Im \la|)^{|\Re \la|}}{\half-|\Re \la |}.
\]
\end{theorem}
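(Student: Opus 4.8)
The plan is to reduce $\|\pi_{\la,\epsilon}\|_{\ub}$ to the operator norm of the single operator $\pi_{\la,\epsilon}(w)$, where $w$ is the Weyl element (rotation through $\pi/2$), and then to extract the asymptotics from an explicit expression for that norm in terms of Gamma functions. First I would record the reduction: since $\pi_{\la,\epsilon}$ is unitary on the lower triangular subgroup $B$ and $\group{SL}(2,\R)=B\cup BwB$ by the Bruhat decomposition, for $b,b'\in B$ one has $\lnorm\pi_{\la,\epsilon}(bwb')\rnorm_{\op}=\lnorm\pi_{\la,\epsilon}(w)\rnorm_{\op}$ while $\lnorm\pi_{\la,\epsilon}(b)\rnorm_{\op}=1$, so $\|\pi_{\la,\epsilon}\|_{\ub}=\max\{1,\lnorm\pi_{\la,\epsilon}(w)\rnorm_{\op}\}$, and likewise for $\pi_{\la,\epsilon,a,b}$. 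This already reduces the strict inequality $\|\pi_{\la,\epsilon,a,b}\|_{\ub}>\|\pi_{\la,\epsilon}\|_{\ub}$ for $a\neq b$ to the claim that $\lnorm\pi_{\la,\epsilon,a,b}(w)\rnorm_{\op}$, viewed as a function of the single ratio $b/a>0$, has a strict minimum at $b/a=1$, which is visible from the explicit action of that operator on the two $B$-irreducible summands of $\Hilb_\Rel$. Since the quantity to be estimated depends only on $\labs\Rel\rabs$ and $\labs\Iml\rabs$ — from the equivalences $\pi_{\la,\epsilon}\simeq\pi_{-\la,\epsilon}$ and $\overline{\pi_{\la,\epsilon}}\simeq\pi_{\bar\la,\epsilon}$, both of which preserve $\|\cdot\|_{\ub}$, or a posteriori from the symmetry of the final formula — I may assume $0\le\Rel=\Re\la<\half$ and $\Iml=\Im\la\ge0$.

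In the realisation of $\Hilb_\Rel$ as a homogeneous Sobolev space on $\R$ (the one on which $B$ acts unitarily), the operator $\pi_{\la,\epsilon}(w)$ is $f(x)\mapsto\labs x\rabs^{2\la-1}\sgn(x)^{\epsilon}f(-1/x)$, and $\pi_{\la,\epsilon}(w)^{2}=(-1)^{\epsilon}I$; it is thus an involution when $\epsilon=0$ and a complex structure when $\epsilon=1$, and in either case its singular values come in reciprocal pairs, so $\lnorm\pi_{\la,\epsilon}(w)\rnorm_{\op}$ is governed by the top of the spectrum of $\pi_{\la,\epsilon}(w)^{*}\pi_{\la,\epsilon}(w)$. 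I would bound this from above and below by matching quantities. For the upper bound, conjugating by the Fourier transform to $L^{2}(\R,\labs\xi\rabs^{2\Rel}\,d\xi)$ turns $\pi_{\la,\epsilon}(w)$ into an explicit integral operator with a Bessel-type kernel, and a Schur test with a power weight bounds its norm by an integral that evaluates to a ratio of Gamma functions at arguments $\tfrac12\pm\la+\labs n\rabs$ and $\tfrac12\pm\Rel+\labs n\rabs$, where $n$ indexes the $K$-types; equivalently, one diagonalises $\pi_{\la,\epsilon}(w)$ against the $K$-isotypic vectors of the line model and takes the supremum over $n$ of these quotients, as in the exact computation of $\lnorm\pi_{\la,\epsilon,a,b}(w)\rnorm_{\op}$ in the earlier parts of the theorem. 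For the lower bound, testing $\pi_{\la,\epsilon}(w)$ on the lowest $K$-type $u_{0}$ recovers the same quotient at $n=0$, which is the dominant term once $\labs\Iml\rabs$ is large.

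It then remains to feed this Gamma quotient into Stirling's formula. The factor $(\half-\labs\Rel\rabs)^{-1}$ comes from the $n=0$ term, where a Gamma argument tends to $0$ as $\Rel\to\pm\half$ — the analytic shadow of the facts that $\pi_{\la,\epsilon}$ ceases to be uniformly bounded, and $\Hilb_\Rel$ degenerates, at the endpoints $\Rel=\pm\half$. The growth $(1+\labs\Iml\rabs)^{\labs\Rel\rabs}$ comes from $\labs\Gamma(\sigma+i\Iml)\rabs\sim\sqrt{2\pi}\,\labs\Iml\rabs^{\sigma-1/2}e^{-\pi\labs\Iml\rabs/2}$: in the quotient the exponential factors cancel and a net power $2\labs\Rel\rabs$ of $\labs\Iml\rabs$ survives, giving $\labs\Iml\rabs^{\labs\Rel\rabs}$ after the square root that passes from $\pi_{\la,\epsilon}(w)^{*}\pi_{\la,\epsilon}(w)$ to $\pi_{\la,\epsilon}(w)$. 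One checks that for $\labs\Iml\rabs$ large the supremum over the $K$-types is attained at bounded $\labs n\rabs$, so the two contributions combine multiplicatively to give $\|\pi_{\la,\epsilon}\|_{\ub}\simeq(1+\labs\Iml\rabs)^{\labs\Rel\rabs}/(\half-\labs\Rel\rabs)$.

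The main obstacle is the middle step: $\pi_{\la,\epsilon}(w)$ is not normal on $\Hilb_\Rel$, so its norm is not simply the largest modulus of an eigenvalue, and turning it into a clean Gamma quotient requires a precise description of the non-orthogonal $K$-isotypic structure of the line Sobolev model — equivalently, of the Gram operator of the Cayley transform relating the circle and line models — together with a Schur upper bound that matches the lower bound from the lowest $K$-type. Controlling the constants there, and checking that the estimate is uniform over the whole strip $\labs\Rel\rabs<\half$ as $\labs\Iml\rabs\to\infty$, is where the real work lies.
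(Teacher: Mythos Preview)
Your reduction $\|\pi_{\lambda,\epsilon}\|_{\ub}=\|\pi_{\lambda,\epsilon}(w)\|_{\op}$ via the Bruhat decomposition is correct and is exactly what the paper does. The genuine gap is in how you propose to compute that operator norm. Your plan hinges on the $K$-isotypic decomposition, but this cannot produce a nontrivial lower bound: since $w=k_{\pi/2}\in K$ and in the compact picture $\pi_{\lambda,\epsilon}(k_\phi)$ is pure translation on the circle, $\pi_{\lambda,\epsilon}(w)$ sends each $K$-type vector $u_n$ to $i^{n}u_n$. Hence $\|\pi_{\lambda,\epsilon}(w)u_n\|_{\Hilb_\alpha}/\|u_n\|_{\Hilb_\alpha}=1$ for every $n$, and ``testing on the lowest $K$-type'' yields only $\|\pi_{\lambda,\epsilon}(w)\|_{\op}\ge 1$, not the Gamma quotient you are after. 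The norm exceeds $1$ precisely because the $K$-types are \emph{not} orthogonal in $\Hilb_\alpha$ once $\alpha\neq 0$ and $\beta\neq 0$; the relevant information sits in their Gram matrix, and the top singular value of a diagonal-in-a-skew-basis operator is not recovered by evaluating at a single basis vector. Your Schur-test upper bound may well give something of the right order, but without a matching lower bound the asymptotic $\simeq$ is unproved.

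The paper avoids this by diagonalising along $A$ rather than $K$. Writing $f$ via the Mellin transform as a superposition of $|t|^{\alpha-\frac12+iu}$ and $\sgn(t)\,|t|^{\alpha-\frac12+iu}$, the $\Hilb_\alpha$ norm becomes an honest $L^2(\R_u)$ norm after absorbing the explicit Gamma-ratio weights $m_0(u),m_1(u)$ (the Fourier transforms of those homogeneous distributions). Since $w$ conjugates $a_s$ to $a_{1/s}$, in Mellin coordinates $\pi_{\lambda,\epsilon}(w)$ is the reflection $u\mapsto 2\beta-u$ combined with a parity swap; pushing the weights through, one obtains $\|\pi_{\lambda,\epsilon}\|_{\ub}=\max\{\|q_{0,\epsilon}\|_\infty,\|q_{1,\epsilon}\|_\infty\}$ with $q_{j,\epsilon}(u)$ an explicit quotient of four Gamma functions. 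The lower bound in the asymptotic then comes from evaluating $|q_{j,\epsilon}|$ at $u=0$ or $u=2\beta$ (Mellin-homogeneous test vectors, not $K$-finite ones), the upper bound from estimating the same quotient uniformly in $u$; both use the Stirling-type bound $|(x+iy)\Gamma(x+iy)|\simeq e^{-\pi|y|/2}(1+|y|)^{x+1/2}$ on the strip. The strict inequality for $a\neq b$ is also obtained in this Mellin picture: it reduces to the pointwise inequality $\|q_{0,\epsilon}q_{1,\epsilon}\|_\infty<\max\{\|q_{0,\epsilon}\|_\infty,\|q_{1,\epsilon}\|_\infty\}^2$, which requires a short but genuine calculation with the functions $t_\alpha(u)=i\tan\bigl(\pi(\tfrac14+\tfrac{\alpha+iu}{2})\bigr)$ and is not ``visible'' from the $B$-decomposition alone. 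Finally, you do not address the first two assertions of the theorem (existence of $\tau$ and its unitary equivalence with some $\pi_{\lambda,\epsilon,a,b}$); in the paper these come from averaging over the amenable subgroup $MA\bar N$ and then applying Schur's lemma on each of the two $MA\bar N$-irreducible summands $\Hilb_\alpha^\pm$.
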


The expression $A(\la)\simeq B(\la)$ for all $\la$ in a subset $E$ of the domains of $A$ and of $B$ means that there exist (positive) constants $C$ and $C'$ such that
\[
C\, A(\la) \le B(\la) \le C'\,A(\la) \qquad\forall \la\in E.
\]

We now provide more context for our results.
The history of  uniformly bounded representations and their role in harmonic analysis is now quite extensive, and we just outline some of the most important work that we have not already mentioned.

Around 1950, a number of researchers looked at uniformly bounded representations in their studies of amenability.
Once it was known that every uniformly bounded representation of an amenable group is unitarizable, that is, similar to a unitary representation, J.~Dixmier \cite{Dix} asked whether this was true in general or whether this characterized amenability.
As already mentioned, the work of Ehrenpreis and Mautner showed that the former possibility does not hold; the status of the latter is still unresolved.
Considerable effort has gone into the construction of uniformly bounded representations.
Apart from their fundamental paper~\cite{KunStn1}, Kunze and Stein~\cite{KunStn2, KunStn3, KunStn4}, as well as several other authors, constructed analytic families of uniformly bounded representations for many noncompact semisimple Lie groups in the 1960s and 1970s.
In the 1970s and 1980s, uniformly bounded representations were constructed for other groups; for example, A.~Fig\`a-Talamanca and M.A.~Picardello \cite{FigPic} and shortly after T.~Pytlik and R.~Szwarc~\cite{PytSzw} found uniformly bounded representations of the noncommutative free groups.


Comparatively recently, G.~Pisier~\cite{Pis1, Pis2} has studied uniformly bounded representations, on the one hand taking giant strides towards the solution of the Dixmier similarity problem and on the other developing the links between uniformly bounded representations and multipliers of the Fourier algebra.
Very recently, K.~Juschenko and P.W.~Nowak~\cite{JusNow} linked uniformly bounded representations with the exactness of discrete groups.

%
In this paper, we return to the roots of all this, and study the uniformly bounded representations of $\group{SL}(2, \R)$ in detail, to  help to further clarify the nature of these still mysterious objects.

Our paper is structured as follows.
In Section~2 we review a few general facts on uniformly bounded representations; in Section~3 we describe the representations of the group $\group{SL}(2,\R)$; and in Section~4 we give precise estimates of the Kunze--Stein representations and prove Theorem~1.1.

\section{Background}
We include here a few results about uniformly bounded and unitary representations needed later.

Suppose that $\pi$ is a uniformly bounded representation of a locally  compact group~$G$.
We may produce a new uniformly bounded representation $\rho$ from $\pi$ by putting an equivalent Hilbert norm on the representation space $\Hilb_\pi$.
When we do this, $\rho$ and $\pi$ are similar; indeed, the identity map from $\Hilb_\pi$ with the original norm to $\Hilb_\pi$ with the new norm is a similarity.
In the next lemma we show that when $G$ has a closed amenable subgroup, there is a clever choice for the equivalent norm.

\begin{lemma}\label{lem:good-ub-norm}
Suppose that $G$ is a locally compact group and $H$ is a closed amenable subgroup of $G$.
Suppose also that $\pi$ is a uniformly bounded representation of $G$.
Then there is an equivalent Hilbert space norm on $\Hilb_\pi$ relative to which $H$ acts unitarily.
Further, the norm of $\pi$ relative to the new norm is no greater than that relative to the old norm.
\end{lemma}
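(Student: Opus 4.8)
The plan is to run the standard averaging argument, taking care to get the \emph{sharp} constant. Since $H$ is amenable it carries an invariant mean; because amenability is preserved under $h\mapsto h^{-1}$, we may fix a mean $m$ on $L^\infty(H)$ that is \emph{right}-invariant (or even bi-invariant), so that $m\bigl(h\mapsto f(hh_0)\bigr)=m\bigl(h\mapsto f(h)\bigr)$ for all $h_0\in H$ and all $f\in L^\infty(H)$. As $\pi$ is strongly continuous and uniformly bounded, for every pair $\xi,\eta\in\Hilb_\pi$ the function $h\mapsto\lip\pi(h)\xi,\pi(h)\eta\rip_{\Hilb_\pi}$ is continuous and bounded, hence lies in $L^\infty(H)$, and we may set
\[
\lip\xi,\eta\rip_{\mathrm{new}}:=m\bigl(h\mapsto\lip\pi(h)\xi,\pi(h)\eta\rip_{\Hilb_\pi}\bigr).
\]

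First I would check that this is an inner product defining an equivalent norm. Conjugate symmetry and sesquilinearity pass from $\lip\cdot,\cdot\rip_{\Hilb_\pi}$ through the linearity of $m$, and, writing $C=\|\pi\|_{\ub}$, the pointwise bounds
\[
C^{-2}\,\lnorm\xi\rnorm_{\Hilb_\pi}^{2}\le\lnorm\pi(h)\xi\rnorm_{\Hilb_\pi}^{2}\le C^{2}\,\lnorm\xi\rnorm_{\Hilb_\pi}^{2}\qquad\forall h\in H
\]
are preserved by $m$ (which is positive and normalised), yielding $C^{-2}\lnorm\xi\rnorm_{\Hilb_\pi}^{2}\le\lnorm\xi\rnorm_{\mathrm{new}}^{2}\le C^{2}\lnorm\xi\rnorm_{\Hilb_\pi}^{2}$; in particular $\lnorm\cdot\rnorm_{\mathrm{new}}$ is positive definite and equivalent to the original norm, hence an equivalent Hilbert norm on the same underlying vector space. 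Let $\rho$ denote $\pi$ regarded as acting on this renormed space.

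Next come the $H$-unitarity and the norm comparison. For $h_0\in H$,
\[
\lnorm\rho(h_0)\xi\rnorm_{\mathrm{new}}^{2}=m\bigl(h\mapsto\lnorm\pi(hh_0)\xi\rnorm_{\Hilb_\pi}^{2}\bigr)=\lnorm\xi\rnorm_{\mathrm{new}}^{2}
\]
by right-invariance of $m$, so $\rho(h_0)$ is an invertible isometry, hence unitary. For arbitrary $x\in G$ I would use the factorisation $hx=(hxh^{-1})h$, which gives the pointwise-in-$h$ estimate
\[
\lnorm\pi(hx)\xi\rnorm_{\Hilb_\pi}=\lnorm\pi(hxh^{-1})\,\pi(h)\xi\rnorm_{\Hilb_\pi}\le\|\pi(hxh^{-1})\|_{\op}\,\lnorm\pi(h)\xi\rnorm_{\Hilb_\pi}\le C\,\lnorm\pi(h)\xi\rnorm_{\Hilb_\pi};
\]
squaring and applying $m$ gives $\lnorm\rho(x)\xi\rnorm_{\mathrm{new}}^{2}=m\bigl(h\mapsto\lnorm\pi(hx)\xi\rnorm_{\Hilb_\pi}^{2}\bigr)\le C^{2}\lnorm\xi\rnorm_{\mathrm{new}}^{2}$, so $\|\rho(x)\|_{\op}\le C$ for every $x$, and therefore $\|\rho\|_{\ub}\le\|\pi\|_{\ub}$ (the matching lower bound being automatic, since the two inequalities in \eqref{eq:def-ub} are equivalent).

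The two points needing care are: ensuring the averaged functions genuinely lie in the domain of $m$ — this is where strong continuity of $\pi$ is used, so that $h\mapsto\lip\pi(h)\xi,\pi(h)\eta\rip_{\Hilb_\pi}$ belongs to $L^\infty(H)$ — and obtaining the bound with constant $C$ rather than $C^{2}$, which is precisely why one inserts $\pi(h)\pi(h)^{-1}=I$ and estimates $\|\pi(hxh^{-1})\|_{\op}$ pointwise before averaging, rather than bounding $\lnorm\pi(hx)\xi\rnorm$ and $\lnorm\pi(h)\xi\rnorm$ separately. A minor bookkeeping matter is to keep the left/right conventions for $m$ consistent with the factorisation $hx=(hxh^{-1})h$; choosing $m$ right-invariant (or bi-invariant) makes both the $H$-unitarity and the norm estimate hold simultaneously. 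This is essentially the classical Day--Dixmier unitarisation trick, the only twist being the attention paid to the sharp constant asserted in the last sentence of the lemma.
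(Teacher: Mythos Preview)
Your proof is correct and follows essentially the same route as the paper: average the inner product against a right-invariant mean on $H$, check equivalence of norms via the pointwise bounds $C^{-2}\le\lnorm\pi(h)\xi\rnorm^2/\lnorm\xi\rnorm^2\le C^2$, and obtain the sharp constant for $\|\rho\|_{\ub}$ by writing $\pi(hx)=\pi(hxh^{-1})\pi(h)$ before averaging. The only difference is that you add some helpful commentary (on continuity, on left/right conventions, and on why the conjugation trick yields $C$ rather than $C^2$) that the paper's proof leaves implicit.
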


\begin{proof}
Take a right invariant mean $m_H$ on $H$.
We define a new inner product on $\Hilb_\pi$ by the formula
\[
\lip \xi, \eta \rip_H = m_H ( h \mapsto \lip \pi(h) \xi, \pi(h) \eta \rip_{\Hilb_\pi}),
\]
and then $\lip \pi(h) \xi, \pi(h) \eta \rip_H = \lip  \xi, \eta \rip_H$ for all $h \in H$ trivially.
From \eqref{eq:def-ub},
\[
\lnorm \pi \rnorm_{\ub}^{-2} \lnorm \xi \rnorm_{\Hilb_\pi}^2 \leq  m_H\lpar h \mapsto  \lnorm \pi(h) \xi \rnorm_{\Hilb_\pi}^2 \rpar \leq \lnorm \pi \rnorm_{\ub}^2 \lnorm \xi \rnorm_{\Hilb_\pi}^2,
\]
and so
\[
\lnorm \pi \rnorm_{\ub}^{-1} \lnorm \xi \rnorm_{\Hilb_\pi} \leq  \lnorm \xi \rnorm_H \leq \lnorm \pi \rnorm_{\ub} \lnorm \xi \rnorm_{\Hilb_\pi};
\]
moreover,
\[
\begin{aligned}
\lnorm \pi(x) \xi \rnorm_H
&= m_H\lpar h \mapsto  \lnorm \pi(h) \pi(x) \xi \rnorm_{\Hilb_\pi}^2 \rpar ^{1/2} \\
&= m_H\lpar h \mapsto  \lnorm \pi(hxh^{-1}) \pi(h) \xi \rnorm_{\Hilb_\pi}^2 \rpar ^{1/2} \\
&\leq m_H\lpar h \mapsto  \lnorm \pi\rnorm_{\ub}^2 \lnorm \pi(h) \xi \rnorm_{\Hilb_\pi}^2 \rpar ^{1/2} \\
&=  \lnorm \pi \rnorm_{\ub}  \lnorm \xi \rnorm_H,
\end{aligned}
\]
as required.
\end{proof}

The next result is well known,
but we include a proof for completeness.
It states that similar unitary representations are in fact unitarily equivalent.

\begin{lemma}\label{lemma:corol-of-Schur}
Suppose that $\pi$ and $\sigma$ are irreducible unitary representations of a group $G$, and $T: \Hilb_\pi \to \Hilb_\sigma$ is a bounded operator with bounded inverse that intertwines  $\pi$ and $\sigma$, that is, $\sigma(x) T = T \pi(x)$ for all $x \in G$.
Then there exist $a \in \R^+$ and a unitary map $U: \Hilb_\pi \to \Hilb_\sigma$ such that $T = aU$.
\end{lemma}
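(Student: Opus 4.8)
The plan is to prove this as a direct consequence of Schur's lemma. First I would consider the operator $T^* T: \Hilb_\pi \to \Hilb_\pi$. Since $T$ intertwines $\pi$ and $\sigma$, and both representations are unitary, I would check that $T^*$ intertwines $\sigma$ and $\pi$: from $\sigma(x) T = T \pi(x)$ we get, on taking adjoints, $T^* \sigma(x)^* = \pi(x)^* T^*$, and by unitarity $\sigma(x)^* = \sigma(x^{-1})$ and $\pi(x)^* = \pi(x^{-1})$, so $T^* \sigma(x^{-1}) = \pi(x^{-1}) T^*$ for all $x$, which is exactly the intertwining relation for $T^*$. Consequently $T^* T$ commutes with $\pi(x)$ for every $x \in G$.

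Next, since $T$ is bounded with bounded inverse, $T^* T$ is a bounded, positive, invertible self-adjoint operator on $\Hilb_\pi$ that commutes with the irreducible unitary representation $\pi$. By Schur's lemma (the version for unitary representations, which applies to bounded self-adjoint intertwiners via the spectral theorem), $T^* T$ must be a scalar multiple of the identity, say $T^* T = c\, I$ with $c \in \R^+$ since $T^* T$ is positive and invertible. Set $a = c^{1/2}$ and $U = a^{-1} T$. Then $U^* U = a^{-2} T^* T = I$, so $U$ is an isometry, and since $U = a^{-1}T$ is surjective (as $T$ is invertible), $U$ is unitary. Finally $U$ intertwines $\pi$ and $\sigma$ because $T$ does and $U$ differs from $T$ by a positive scalar, giving $T = aU$ with $U$ unitary and $a \in \R^+$, as required.

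The only genuine subtlety — and the step I would be most careful about — is the invocation of Schur's lemma in the form needed here: one must know that a bounded operator commuting with an irreducible unitary representation is scalar. The standard route is to reduce to the self-adjoint case (which $T^*T$ already satisfies) and then apply the spectral theorem: if $T^*T$ were not scalar, some spectral projection would be a nontrivial bounded operator commuting with $\pi$, and its range would be a proper nonzero closed $\pi$-invariant subspace, contradicting irreducibility. Since the lemma is described in the paper as "well known" and included only "for completeness," I would state this spectral-theorem argument briefly rather than develop the full machinery, and note that the positivity and invertibility of $T^*T$ pin down the scalar to lie in $\R^+$.
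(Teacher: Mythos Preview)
Your proposal is correct and follows essentially the same approach as the paper: take adjoints to show $T^*$ intertwines $\sigma$ and $\pi$, deduce that $T^*T$ commutes with $\pi$, apply Schur's lemma to conclude $T^*T = a^2 I$ with $a>0$, and then $a^{-1}T$ is unitary. The paper's version is terser, simply invoking Schur's lemma without the spectral-theorem justification you supply, but the argument is the same.
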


\begin{proof}
By taking adjoints, we see that $T^* \sigma(x) = \pi(x) T^*$ for all $x \in G$, and hence $T^*T \pi(x) = \pi(x) T^* T$ for all $x \in G$.
By Schur's lemma, $T^*T$ is a scalar operator; we take $T^*T$ to be multiplication by $a^2$, where $a >0$.
Now $a^{-1}T$ is unitary.
\end{proof}

We are going to use techniques of classical analysis.
We denote by $\|\cdot\|_p$ the usual norm on the Lebesgue space $L^p(\R)$, where $1\le p\le\infty$, and we define the Fourier transform $\hat f$  of a function $f$ on $\R$ by
\[
\hat f (\xi)=\int_\R f(x)\,e^{-ix\xi}\,dx
\qquad \forall \xi \in \R.
\]
Then the Fourier transform extends to a multiple of a unitary operator on $L^2(\R)$, and more precisely,
\[
\|\hat f\|_2=\sqrt{2\pi}\,\|f\|_2
\qquad\forall f \in L^2(\R).
\]



\section{The group $\group{SL}(2,\R)$}\label{sec:sl2r}

We now describe $\group{SL}(2, \R)$, abbreviated to $G$ for convenience, and various decompositions and representations thereof.
We present an approach that the second-named author learnt from Kunze many years ago.
First, define subgroups $K$, $M$, $A$, $N$ and $\bar N$ of $G$ as follows:
\[
\begin{gathered}
K = \lset k_\theta : \theta \in \R \rset \qquad M = \lset m_{\pm}  \rset \qquad A = \lset a_s : s \in \R^+ \rset \\ N = \lset n_t : t \in \R \rset \qquad \bar N = \lset \bar n_t : t \in \R \rset,
\end{gathered}
\]
where
\[
\begin{gathered}
k_\theta = \begin{pmatrix} \cos\theta & \sin\theta \\ -\sin\theta & \cos\theta \end{pmatrix} 
\qquad
m_\pm = \begin{pmatrix} \pm 1 &  0 \\  0 & \pm 1 \end{pmatrix}
\qquad
a_s = \begin{pmatrix} s &0 \\ 0  & s^{-1} \end{pmatrix} 
\\
n_t = \begin{pmatrix} 1 & 0 \\ t & 1 \end{pmatrix} 
\qquad
\bar n_t = \begin{pmatrix} 1 & t \\ 0 & 1 \end{pmatrix} ;
\end{gathered}
\]
we will write $w$ for the rotation $k_{\pi/2}$.

Consider $\R^2$ as a space of row vectors, and $G$ acting on $\R^2$ by right multiplication.
Then $G$ fixes the origin and acts transitively on $\R^2 \setminus \{(0,0)\}$.
Write $\base$ for the ``base point'' $(1,0)$, and $B$ for the space $\R^2\setminus\{(0,0)\}$.
The subgroup $N$ is the stabiliser of the point $\base$, and so $B$ may be identified with the coset space $N\backslash G$.
The polar decomposition in $B$ leads to the Iwasawa decomposition of $G$: every element $x$ of $G$ may be expressed uniquely in the form
\[
x = nak
\]
where $n \in N$, $a \in A$ and $k \in K$.
Indeed, $\base x \in B$, and if we choose $s = \| \base x \|$ and $\theta = \arg(\base x)$, then $\base  a_s k_\theta = \base x$; there is therefore an element $n$ of $N$ such that $x = n a_s k_\theta$; further, since $k_\theta$ and $a_s$ are uniquely determined, $n$ is also unique.
We may describe the Bruhat decomposition in similar terms: $B$ is the disjoint union of the real axis (minus the origin) and $\R^2$ minus the real axis, and this corresponds to writing $G$ as the disjoint union $(NAM) \sqcup (NAM w NAM)$.

We now consider the space $\vect_{\lambda,\epsilon}$, where
$\lambda \in \C$ and $\epsilon$ is either $0$ or $1$, of smooth functions on $B$ that satisfy
\[
f(\delta v) = |\delta|^{2\lambda-1} \sgn(\delta)^\epsilon f(v)
\qquad\forall v \in B \quad\forall \delta \in \R\setminus\{0\} ,
\]
equipped with the topology of locally uniform convergence of all partial derivatives.
Since $G$ acts on $B$ and commutes with scalar multiplication, $G$ acts on $\vect_{\lambda,\epsilon}$ by the formula
\[
\pi_{\lambda,\epsilon}(x) f(v) = f(vx)
\qquad\forall v \in B\quad\forall x \in G.
\]
We obtain the ``compact picture'' of the representation by restricting $v$ to lie in the circle $\base K$, and observing that
\[
\pi_{\lambda,\epsilon}(x) f(v) = \labs vx \rabs^{2\lambda-1} f( \labs vx \rabs^{-1} vx) .
\]
The ``noncompact picture'' is obtained similarly, by restricting $v$ to lie on the line $\base \bar N$, and observing that
\begin{equation}\label{eq:noncompact}
\pi_{\lambda,\epsilon}(x) f(1,t) = \sgn^\epsilon (a+tc) \labs a+tc \rabs^{2\lambda-1} f( 1, x \cdot t) \qquad\forall t \in \R,
\end{equation}
where
\[
x = \begin{pmatrix} a & b \\ c& d \end{pmatrix}
\qquad\text{and}\qquad
x \cdot t = \dfrac{b+dt}{a+ct}.
\]
Clearly some care is required ``at infinity'' in this version of the representation.

By completing $\vect_{\lambda,\epsilon}$ in an appropriate norm, we may obtain representations of $G$ on Hilbert or Banach spaces.
First note that $|f|$ is even, and so
\[
\int_{-\pi/2}^{\pi/2} |f(\cos\theta,\sin\theta )| \,d\theta = \frac{1}{2} \int_{-\pi}^{\pi} |f(\cos\theta,\sin\theta )| \,d\theta .
\]
Now observe that if $p(\Re\lambda-\frac12) = 1$, then
\[
\int_\R \labs f(\base  \bar n_t) \rabs^p \,dt  = \int_\R \labs f(1,t) \rabs^p \,dt  = \int_{-\pi/2}^{\pi/2} \labs f(\cos\theta,\sin\theta ) \rabs^p \,d\theta = \int_{-\pi/2}^{\pi/2} \labs f(\base  k_\theta ) \rabs^p \,d\theta.
\]
Indeed, $f(1, \tan\theta) = (1+\tan^2\theta)^{\lambda-1/2} \,f(\cos\theta,\sin\theta)$, and the formula above is a consequence of setting $t = \tan\theta$ and changing variables.
Taking $p$th roots, we see that
\[
\lpar \int_\R \labs f(\base  \bar n_t) \rabs^p \,dt \rpar^{1/p}  = \lpar \frac{1}{2} \int_{-\pi}^{\pi} \labs f(\base  k_\theta ) \rabs^p \,d\theta \rpar^{1/p}.
\]
Now the left hand integral is trivially unchanged if we replace $f$ by $\pi_{\lambda,\epsilon}(\bar n_u)f$, while the right hand integral is trivially unchanged if we replace $f$ by $\pi_{\lambda,\epsilon}(k_\phi)f$.
Since the smallest subgroup of $G$ that contains both $\bar N$ and $K$ is $G$, it follows that in fact the integrals above are unchanged if we replace $f$ by $\pi_{\lambda,\epsilon}(x)f$ for any $x \in G$.

The representations $\pi_{\lambda,\epsilon}$, where $\Re \lambda = 0$,
are isometric on a Hilbert space, and hence unitary.
These representations are irreducible, except when $(\lambda,\epsilon) = (0,1)$; they are known as the unitary principal series.

Take $f \in \vect_{\lambda,\epsilon}$ and $g \in \vect_{\mu,\epsilon}$, where $\lambda+\mu=0$.
\altfootnote{$z+w=-2$}
By a simple variant of the argument above, we may show that
\[
\int_\R  f(1,t) \, g(1,t) \,dt = \int_{-\pi/2}^{\pi/2}  f(\cos\theta,\sin\theta ) \, g(\cos\theta,\sin\theta ) \,d\theta .
\]
We define $(f, g)$ to be either of the above integrals, then the bilinear form $( \cdot,\cdot)$  is well defined and $G$-invariant, and exhibits the canonical duality between $\vect_{\lambda,\epsilon}$ and $\vect_{-\lambda,\epsilon}$.

When $\lambda \in (-1/2,0)$, we may find a Hilbert norm such that $\pi_{\lambda,0}$ acts isometrically, and hence unitarily.
Since
\[
( \tan\theta - \tan\phi ) \cos\theta \cos\phi 
= \sin(\theta - \phi),
\]
it follows that if $\lambda \in (-1/2,0)$ and $f \in \vect_{\lambda,0}$, then
\[
\begin{aligned}{}
&\int_{\R} \int_{\R} f(1,t) \, \bar f(1,u) \labs t - u\rabs ^{-(1+2\lambda)} \,dt \,du \\
&\quad = \int_{-\pi/2}^{\pi/2}  \int_{-\pi/2}^{\pi/2} f(1,\tan\theta) \, \bar f(1, \tan\phi) \,\frac{ \labs \tan\theta - \tan\phi \rabs ^{-(1+2\lambda)}} {\cos^2\theta \cos^2\phi} \,d\theta\,d\phi \\
&\quad =  \int_{-\pi/2}^{\pi/2}  \int_{-\pi/2}^{\pi/2} f(\cos\theta,\sin\theta ) \,\bar f(\cos\phi,\sin\phi ) \,\frac{\labs \tan\theta - \tan\phi \rabs ^{-(1+2\lambda)}}{ \cos^{1+2\lambda} \theta \cos^{1+2\lambda} \phi} \,d\theta\,d\phi \\
&\quad =  \int_{-\pi/2}^{\pi/2}  \int_{-\pi/2}^{\pi/2} f(\cos\theta,\sin\theta ) \,\bar f(\cos\phi,\sin\phi ) \labs \sin(\theta-\phi) \rabs ^{-(1+2\lambda)} \,d\theta\,d\phi \\
&\quad =  \frac{1}{4}\int_{-\pi}^{\pi}  \int_{-\pi}^{\pi} f(\cos\theta,\sin\theta ) \,\bar f(\cos\phi,\sin\phi ) \labs \cosec(\theta-\phi) \rabs ^{1+2\lambda} \,d\theta\,d\phi ;
\end{aligned}
\]%
\altfootnote{
\[
\begin{aligned}{}
&\int_{\R} \int_{\R} f(1,t) \, \bar f(1,u) \labs t - u\rabs ^{-2-z} \,dt \,du \\
&\quad = \int_{-\pi/2}^{\pi/2}  \int_{-\pi/2}^{\pi/2} f(1,\tan\theta) \, \bar f(1, \tan\phi) \labs \tan\theta - \tan\phi \rabs ^{-2-z} \sec^2\theta \sec^2\phi \,d\theta\,d\phi \\
&\quad =  \int_{-\pi/2}^{\pi/2}  \int_{-\pi/2}^{\pi/2} f(\cos\theta,\sin\theta ) \,\bar f(\cos\phi,\sin\phi ) \labs \tan\theta - \tan\phi \rabs ^{-2-z} \sec^{2+z} \theta \sec^{2+z} \phi \,d\theta\,d\phi \\
&\quad =  \int_{-\pi/2}^{\pi/2}  \int_{-\pi/2}^{\pi/2} f(\cos\theta,\sin\theta ) \,\bar f(\cos\phi,\sin\phi ) \labs \sin(\theta-\phi) \rabs ^{-2-z} \,d\theta\,d\phi \\
&\quad =  \frac{1}{4}\int_{-\pi}^{\pi}  \int_{-\pi}^{\pi} f(\cos\theta,\sin\theta ) \,\bar f(\cos\phi,\sin\phi ) \labs \cosec(\theta-\phi) \rabs ^{2+z} \,d\theta\,d\phi ;
\end{aligned}
\]}%
all integrals converge absolutely.
Taking square roots, and introducing some notation,  we see that
\[
\begin{aligned}{}
\Hilbnorm{f}{\lambda, \R}
& := C_\lambda \lpar \int_{\R} \int_{\R} f(1,t) \, \bar f(1,u) \labs t - u\rabs ^{-(1+2\lambda)} \,dt \,du \rpar^{1/2}  \\
&=  C_\lambda \lpar \frac{1}{4}\int_{-\pi}^{\pi}  \int_{-\pi}^{\pi} f(\cos\theta,\sin\theta ) \,\bar f(\cos\phi,\sin\phi ) \labs \cosec(\theta-\phi) \rabs ^{1+2\lambda} \,d\theta\,d\phi \rpar^{1/2} \\
&=:  \Hilbnorm{f}{\lambda, \T} ,
\end{aligned}
\]
say; the positive constant $C_\lambda$ is chosen such that
\begin{equation}\label{eq:Hilbnorm}
\Hilbnorm{f}{\lambda,\R}
= \lpar \int_{\R}  \biglabs \labs r \rabs^{\la} \hat f(1,r) \bigrabs^2  \,dr \rpar^{1/2} ,
\end{equation}
where the Fourier transform acts in the second variable only.
Thus the first norm is a homogeneous fractional Sobolev norm on $\R$; the second norm is equivalent to an inhomogeneous fractional Sobolev norm on even functions on the circle.
Much as before, the norms above are $G$-invariant, and so $\pi_{\lambda,0}$ acts unitarily on the completion of $\vect_{\lambda,0}$ in this norm, which may be identified with a space of distributions on $B$.
The duality described in the preceding paragraph enables us to find a Hilbert norm so that $\pi_{\lambda,0}$ acts unitarily when $\lambda \in (0,1/2)$; this norm is also given by the formula \eqref{eq:Hilbnorm}.
The representations $\pi_{\lambda,0}$ and $\pi_{-\lambda,0}$ are unitarily equivalent.
\altfootnote{$z \in (-1,0)$}
The family of representations $\pi_{\lambda,0}$, where $\lambda \in (-\half,0) \cup (0, \half) $ is called the complementary series.

Recall that when $\lambda$ is purely imaginary, the representations $\pi_{\lambda, \epsilon}$ act unitarily on the completion of $\vect_{\lambda,0}$ in the norm $\Hilbnorm{\cdot}{0, \R}$, giving us the unitary principal series.
For completeness, we mention that $G$ has some additional irreducible unitary representations, namely, the trivial representation, the discrete series of representations (which appear as subspaces or quotient spaces of the representations $\pi_{\lambda,\epsilon}$ when $\epsilon = 0$ and $\lambda = \pm \half, \pm \frac{3}{2}, \dots$ and when $\epsilon = 1$ and $\lambda = \pm 1, \pm2$, \dots), and the limits of discrete series representations, which are the two distinct summands of the reducible representation $\pi_{0,1}$.

\section{Uniformly bounded representations}

Producing uniformly bounded representations is more difficult than producing unitary representations.
In this section, we do this, and compute the norms of the Kunze--Stein representations and of more general uniformly bounded representations.
We denote the Cartesian form of a complex number $\la$ by $\Rel+i\Iml$; we suppose throughout that $|\Rel | < \half$.

Since our analysis is carried out in the noncompact picture, we shall more simply write $f(t)$ instead of $f(1,t)$. 
Moreover we shall realise the representation $\pi_{\la,\epsilon}$ on the completion $\Hilb_\Rel$ of the space $C_c^\infty(\R)$ of smooth functions with compact support in the norm
\[
\Hilbnorm{f}{\Rel}=
\lpar \int_{0}^{\infty}  \biglabs \labs r \rabs^{\Rel} \hat f(r) \bigrabs^2  \,dr + \int_{-\infty}^0  \biglabs \labs r \rabs^{\Rel} \hat f(r) \bigrabs^2 \,dr \rpar^{1/2} .
\]
The following lemma, observed by Kunze and Stein \cite{KunStn1}, is one of the keys to our approach.

\begin{lemma}\label{lem:KS1}
Suppose that $|\Rel| < \half$.
Then $\pi_{\la,\epsilon}$ is a uniformly bounded representation of the group $G$ on $\Hilb_\Rel$, and
\[
\lnorm \pi_{\lambda, \epsilon} \rnorm_{\ub} = \lnorm \pi_{\lambda, \epsilon}(w) \rnorm_{\op} .
\]
\end{lemma}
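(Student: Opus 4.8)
The plan is to exploit the Bruhat-type structure of $G$ recorded in Section~\ref{sec:sl2r}: the subgroup $\bar N A M$ (lower triangular matrices, up to sign) together with the single element $w = k_{\pi/2}$ generates $G$, and $G = (N A M) \sqcup (N A M\, w\, N A M)$. First I would check directly, using the noncompact formula \eqref{eq:noncompact}, that $\pi_{\la,\epsilon}$ acts isometrically on $\Hilb_\Rel$ for every element of the lower triangular subgroup $P = \bar N A M$. Indeed, for $\bar n_u$ the action is a translation $f(t) \mapsto f(t+u)$ in the noncompact picture (after identifying $\base\bar n_t$ with the line), which is an isometry on the homogeneous Sobolev space defined by $\Hilbnorm{\cdot}{\Rel}$ since that norm is translation-invariant; for $a_s$ it is a dilation combined with the character $|s|^{2\la-1}$, and the homogeneity exponent $\Rel = \Re\la$ is exactly chosen so that $\Hilbnorm{\cdot}{\Rel}$ is invariant; and $m_\pm$ acts by $\pm 1$ or a sign, again an isometry. (The two separate integrals over $(0,\infty)$ and $(-\infty,0)$ in the definition of $\Hilbnorm{\cdot}{\Rel}$ are there precisely so that the $\epsilon$-twist and the reflection behaviour are handled correctly; this needs checking but is routine once the homogeneity is set up.)

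Granting that, one inequality is immediate: since $\pi_{\la,\epsilon}(p)$ is an isometry for $p \in P$ and $w$ normalises nothing helpful but $G$ is generated by $P$ and $w$, every $x \in G$ can be written as a word in elements of $P$ and copies of $w$; but more efficiently, from the Bruhat decomposition every $x \notin P$ has the form $x = p_1 w p_2$ with $p_1, p_2 \in NAM$ — here one must be slightly careful, as the decomposition in the excerpt is stated with $N$, not $\bar N$, on the outside, so I would either conjugate by $w$ or redo the Bruhat computation with $\bar N$; the upshot is that $x$ is, up to isometries on both sides, a single $w$. Hence $\|\pi_{\la,\epsilon}(x)\|_{\op} \le \|\pi_{\la,\epsilon}(w)\|_{\op}$ for all $x$, which gives $\|\pi_{\la,\epsilon}\|_{\ub} \le \|\pi_{\la,\epsilon}(w)\|_{\op}$ (and $\ge 1$, so in particular $\pi_{\la,\epsilon}$ is uniformly bounded, establishing the first assertion). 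The reverse inequality $\|\pi_{\la,\epsilon}\|_{\ub} \ge \|\pi_{\la,\epsilon}(w)\|_{\op}$ is trivial from the definition of $\|\cdot\|_{\ub}$ as a supremum over all group elements.

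The main obstacle is the Bruhat bookkeeping: one needs that \emph{every} $x \in G$ factors as (element of $P$)$\cdot w \cdot$(element of $P$) or lies in $P$, with the middle factor being exactly $w$ and not some other rotation. The excerpt gives $G = (NAM) \sqcup (NAM\, w\, NAM)$, and since $\bar N = w N w^{-1}$ (up to the obvious sign, as $w k_\theta w^{-1} = k_\theta$ and $w \bar n_t w^{-1} = n_{-t}$ or similar — this I would verify by a $2\times 2$ matrix computation), conjugating the whole decomposition by $w$ converts it into a statement about $\bar N A M$. The only subtlety is that $NAM \ne P$: $NAM$ involves upper triangular unipotents. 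So the cleanest route is: write $G = P \sqcup P w P$ by the symmetric Bruhat decomposition (equivalently, factor $x = \bar n\, a\, m\, k$ via Iwasawa with $\bar N$ in place of $N$, which holds by the same argument given in the excerpt for $N$ — $\bar N$ is the stabiliser of $(0,1)$ rather than $(1,0)$, and one redoes the polar-decomposition argument at the point $(0,1)$), and then observe that $k_\theta$ itself is either in $P$ (when $\theta = 0$ or $\pi$) or equals $\bar n\, a\, m\, w\, \bar n'$ for suitable lower-triangular elements — this last step is the $KAK$-type factorisation of a rotation through $w$, done by an explicit $2\times 2$ computation. Once that factorisation is in hand, the estimate $\|\pi(k_\theta)\|_{\op} \le \|\pi(w)\|_{\op}$ follows because the $\bar n, a, m$ factors act isometrically, and then $\|\pi(x)\|_{\op} = \|\pi(\bar n a m)\pi(k_\theta)\|_{\op} = \|\pi(k_\theta)\|_{\op} \le \|\pi(w)\|_{\op}$ for all $x$, completing the proof.
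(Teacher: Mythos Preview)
Your approach is essentially the same as the paper's: show $\bar N A M$ acts unitarily on $\Hilb_\Rel$, then use Bruhat to reduce everything to the single operator $\pi_{\la,\epsilon}(w)$. The paper's proof is two sentences and simply asserts the $\bar N A M$ version of the Bruhat decomposition, $G = \bar N A M \sqcup \bar N A M\, w\, \bar N A M$, directly; your detour through Iwasawa and factoring $k_\theta$ as $\bar n\, a\, m\, w\, \bar n'$ is correct but unnecessary, since the $\bar N$-Bruhat decomposition follows immediately from the $N$-version stated in Section~\ref{sec:sl2r} by conjugating by $w$ (as you note yourself: $w N w^{-1} = \bar N$ and $w$ normalises $AM$). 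One terminological slip: $\bar N A M$ is the \emph{upper} triangular subgroup in the paper's conventions, not the lower one.
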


\begin{proof}
It is easy to see that the subgroup $\bar NAM$ of $G$ acts unitarily on $\Hilb_\Rel $.
Now the Bruhat decomposition  $G = \bar N AM \sqcup \bar N AM w \bar N AM$ implies that $G$ acts uniformly boundedly on this space if and only if $\pi_{\lambda,\epsilon}(w)$ is bounded thereon, and the uniformly bounded norm of the representation is the norm of the single operator $\pi_{\lambda,\epsilon}(w)$.
\end{proof}

Kunze and Stein \cite{KunStn1} just estimated the operator norm $\lnorm \pi_{\lambda, \epsilon}(w) \rnorm_{\op}$; in this paper we compute it exactly.
First however we construct some more general uniformly bounded representations.

Observe that for all $f$ in $C_c^\infty(\R)$,
\begin{equation}
\label{eq:weyl-action}
\pi_{\lambda,\epsilon}(w) f (t)
 =\sgn^\epsilon(-t) \labs t\rabs^{2(\Rel+i\Iml)-1}
 f(-1/t)
 \quad \forall t\in \R.
\end{equation}

We may produce a new uniformly bounded representation by defining an equivalent norm $\lnorm \cdot \rnorm_{\Hilbab}$ on $\Hilb_\Rel$, thus:
\[
\lnorm f \rnorm_{\Hilbab} =  \lpar a \int_{0}^{\infty}  \biglabs \labs r \rabs^{\Rel} \hat f(r) \bigrabs^2  \,dr + b \int_{-\infty}^0  \biglabs \labs r \rabs^{\Rel} \hat f(r) \bigrabs^2 \,dr \rpar^{1/2} ,
\]
where $a,b>0$, and considering $\pi_{\lambda, \epsilon}$ acting on $\Hilb_\Rel$ with this new norm.
This representation is uniformly bounded because the new norm is equivalent to the old norm.
Further, when $a = b$, the norm on the space $\Hilbab$ is a multiple of the norm on $\Hilb_\Rel$; the uniformly bounded norms of $\pi_{\lambda, \epsilon}$ on the spaces $\Hilbab$ and $\Hilb_\Rel$ therefore coincide in this case.

We write $\pi_{\la,\epsilon,a,b}$ for the representation $\pi_{\lambda, \epsilon}$ on $\Hilbab$.

\begin{lemma}\label{sigma}
Suppose that $\sigma$ is a uniformly bounded representation of $G$ that is similar to $\pi_{\la,\epsilon}$.
Then there exist $\tau$, obtained from $\sigma$ by renorming the representation space, and $a$ and $b$ in $\R^+$ such that $\tau$ and $\pi_{\la,\epsilon,a,b} $ are unitarily equivalent.
%
Further,
\[
\| \sigma \|_{\ub} \geq \| \tau \|_{\ub} =\| \pi_{\la,\epsilon,a,b} \|_{\ub} .
\]
\end{lemma}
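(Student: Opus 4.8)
The plan is to combine the renorming of Lemma~\ref{lem:good-ub-norm} with a Schur-type argument tailored to the subgroup $\bar NAM$, the point being that $\pi_{\la,\epsilon}$ restricted to $\bar NAM$ splits into exactly two inequivalent irreducible pieces, so that its commutant is two-dimensional and produces precisely the parameters $a$ and $b$.

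First I would note that $\bar NAM$ is a closed amenable subgroup of $G$, being a finite extension of the solvable group $\bar NA$. Applying Lemma~\ref{lem:good-ub-norm} to $\sigma$ with $H=\bar NAM$ yields a representation $\tau$, obtained from $\sigma$ by replacing the Hilbert norm on $\Hilb_\sigma$ by an equivalent one, such that $\tau$ restricts to a unitary representation of $\bar NAM$ and $\lnorm\sigma\rnorm_{\ub}\ge\lnorm\tau\rnorm_{\ub}$. Since similarity is transitive, $\tau$ is similar to $\pi_{\la,\epsilon}$; fix an intertwiner $T\colon\Hilb_\Rel\to\Hilb_\tau$ that is bounded with bounded inverse, so that $\tau(x)T=T\pi_{\la,\epsilon}(x)$ for all $x\in G$. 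Recall from the proof of Lemma~\ref{lem:KS1} that $\pi_{\la,\epsilon}$ also restricts to a unitary representation of $\bar NAM$ on $\Hilb_\Rel$. Taking adjoints in the intertwining relation and using that $\tau(h)$ and $\pi_{\la,\epsilon}(h)$ are unitary for $h\in\bar NAM$, one gets $T^*\tau(h)=\pi_{\la,\epsilon}(h)T^*$, whence $T^*T\,\pi_{\la,\epsilon}(h)=T^*\tau(h)T=\pi_{\la,\epsilon}(h)\,T^*T$ for all $h\in\bar NAM$; that is, the positive invertible operator $T^*T$ lies in the commutant of $\pi_{\la,\epsilon}(\bar NAM)$.

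The heart of the matter is to identify this commutant. Taking the Fourier transform in $t$, the space $\Hilb_\Rel$ splits as the orthogonal sum $\Hilb_\Rel^{+}\oplus\Hilb_\Rel^{-}$ of the subspaces on which $\hat f$ is supported in $[0,\infty)$, respectively $(-\infty,0]$, and this decomposition is preserved by $\pi_{\la,\epsilon}(\bar NAM)$: the group $\bar N$ acts by the modulations $\hat f(r)\mapsto e^{iur}\hat f(r)$, the group $A$ by the (norm-preserving) dilations $\hat f(r)\mapsto s^{2\la+1}\hat f(s^2r)$, and $M$ by the scalar $(-1)^\epsilon$. On $\Hilb_\Rel^{+}$, after the unitary substitution $r=e^\xi$ this becomes, up to a unimodular factor, the pair given by translations in $\xi$ and by multiplication operators $\Psi(\xi)\mapsto e^{iue^\xi}\Psi(\xi)$; by the standard imprimitivity (Stone--von Neumann) argument these generate all bounded operators, so $\bar NA$ acts irreducibly on $\Hilb_\Rel^{+}$, and likewise on $\Hilb_\Rel^{-}$. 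The two pieces are inequivalent as $\bar NA$-modules, since $\bar N$ acts on them by modulations $e^{iur}$ with $r$ ranging over $(0,\infty)$, respectively $(-\infty,0)$. Hence the commutant of $\pi_{\la,\epsilon}(\bar NAM)$ consists of the operators that are scalar on each summand, and we may write $T^*T=aP_{+}+bP_{-}$, where $P_\pm$ denotes the orthogonal projection onto $\Hilb_\Rel^{\pm}$ and, by positivity and invertibility of $T^*T$, $a,b\in\R^+$.

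Finally, for $f$ in $C_c^\infty(\R)$,
\[
\lnorm Tf\rnorm_{\Hilb_\tau}^{2}=\lip T^*Tf,f\rip_{\Hilb_\Rel}=a\lnorm P_{+}f\rnorm_{\Hilb_\Rel}^{2}+b\lnorm P_{-}f\rnorm_{\Hilb_\Rel}^{2}=\lnorm f\rnorm_{\Hilbab}^{2},
\]
by the definition of the norm on $\Hilbab$; as $T$ is surjective, $T\colon\Hilbab\to\Hilb_\tau$ is unitary, and it still intertwines, so $\tau$ is unitarily equivalent to $\pi_{\la,\epsilon,a,b}$. Unitarily equivalent representations have equal uniformly bounded norms, so $\lnorm\tau\rnorm_{\ub}=\lnorm\pi_{\la,\epsilon,a,b}\rnorm_{\ub}$, which combined with $\lnorm\sigma\rnorm_{\ub}\ge\lnorm\tau\rnorm_{\ub}$ from the first step gives the assertion. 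I expect the main obstacle to be the computation of the commutant in the third paragraph, namely verifying that $\pi_{\la,\epsilon}$ restricted to $\bar NAM$ is the direct sum of exactly two inequivalent irreducible representations; everything else is routine once that structural fact is established.
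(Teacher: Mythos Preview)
Your proposal is correct and follows essentially the same route as the paper: renorm via Lemma~\ref{lem:good-ub-norm} for the amenable subgroup $\bar NAM$, then exploit that $\pi_{\la,\epsilon}|_{\bar NAM}$ decomposes as two inequivalent irreducibles $\Hilb_\Rel^{\pm}$ to pin down $T$ up to two scalars. The only cosmetic difference is that the paper applies Lemma~\ref{lemma:corol-of-Schur} separately to each component $T|_{\Hilb_\Rel^{\pm}}$, whereas you compute the commutant directly and write $T^*T=aP_{+}+bP_{-}$; these are equivalent Schur-type arguments, and your version has the mild advantage of also supplying a sketch of the irreducibility and inequivalence that the paper simply asserts.
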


\begin{proof}
Take a uniformly bounded representation $\sigma$ of $G$, similar to $\pi_{\la,\epsilon}$.
Since $MA{\bar N}$ is amenable, by Lemma \ref{lem:good-ub-norm} there exists an equivalent Hilbert norm on $\Hilb_\sigma$ such that $\tau$, the representation $\sigma$ acting on the equivalent Hilbert space, is unitary on $MA{\bar N}$.
Moreover $\| \sigma \|_{\ub} \geq \| \tau \|_{\ub}$.

Since also $\tau$ and $\pi_{\la,\epsilon}$ are similar,
there exists a bounded map $T: \Hilb_\Rel \to \Hilb_{\tau}$ with bounded inverse such that $T \pi_{\la,\epsilon}(x) = \tau(x) T$ for all $x \in G$.

The representation space $\Hilb_\Rel$ splits into two complementary unitarily inequivalent  subspaces, $\Hilb^\pm_\Rel$ say, given by
\[
\Hilb^\pm_\Rel
=\{f\in \Hilb_\Rel\, :\, \hat f|_{\R^\mp}=0\} ,
\]
on both of which $\pi_{\la,\epsilon}|_{MA{\bar N}}$ acts irreducibly.
We define $\Hilb_{\tau}^\pm = T \Hilb^\pm_\Rel$; then the unitary representation ${\tau}|_{MA{\bar N}}$ acts irreducibly on $\Hilb_{\tau}^+$ and $\Hilb_{\tau}^-$, whence $\Hilb_{{\tau}} = \Hilb^+_{\tau} \oplus \Hilb^-_{\tau}$.
By Lemma \ref{lemma:corol-of-Schur} applied to each irreducible component,  $T |_{\Hilb^\pm_\Rel}:\Hilb^\pm_\Rel \to \Hilb_{\tau}^\pm$ is a multiple of a unitary map, and so for the right choice of $a$ and $b$, the intertwining operator is unitary from $\Hilbab$ to $\Hilb_{\tau}$, and we are done.
\end{proof}

The rest of this section will be devoted to computing $  \| \pi_{\la,\epsilon,a,b} \|_{\ub} $, where $a,b \in \R^+$.
When $a=b$ the norm $  \| \pi_{\la,\epsilon,a,b} \|_{\ub} $ reduces to the norm of the Kunze--Stein representation~$\pi_{\la,\epsilon}$ and we will simply write  $  \| \pi_{\la,\epsilon} \|_{\ub} $.

\begin{lemma}\label{moltiplicatore}
Suppose that $a,b \in \R^+$.
Define $\theta = (a-b)/(a+b)$,
\[
 m_{0,\Rel}(u)=\sqrt{2\pi}\,\, 2^{\Rel +iu} \frac{\Gamma\left(\frac14+\frac{\Rel+iu}2\right)}{\Gamma\left(\frac14-\frac{\Rel+iu}2\right)}
\qquad
 m_{1,\Rel}(u)=i\sqrt{2\pi}\,\, 2^{\Rel +iu} \frac{\Gamma\left(\frac34+\frac{\Rel+iu}2\right)}{\Gamma\left(\frac34-\frac{\Rel+iu}2\right)}  \,,
\]
and
\[
\quoz_{0,\epsilon}=\quoz_{0,\epsilon,\Rel+i\Iml}=\frac{m_{0,\Rel} (2\Iml - \cdot)}{m_{\epsilon,\Rel}}
\qquad \qquad
\quoz_{1,\epsilon}=\quoz_{1,\epsilon,\Rel+i\Iml}=-\frac{m_{1,\Rel} (2\Iml - \cdot)}{m_{1-\epsilon,\Rel}}  \,.
\]
Then $\| \pi_{\la,\epsilon,a,b} \|_{\ub}^2$ is equal to
 \begin{equation}\label{norma_sigma}
\sup\left\{
\frac{
 \lnorm
h_\epsilon \,\quoz_{0,\epsilon}
 \rnorm_2^2
+ \lnorm
h_{1-\epsilon} \,\quoz_{1,\epsilon}
 \rnorm_2^2
+2\theta
\Re \langle
h_\epsilon \, \quoz_{0,\epsilon} \, ,
h_{1-\epsilon} \, \quoz_{1,\epsilon}
\rangle_{L^2}
}{ \lnorm h_0
 \rnorm_2^2
+ \lnorm h_1
 \rnorm_2^2
+2\theta
\Re \langle h_0 \, , h_1
\rangle_{L^2}
}
\,:\,
\|h_0\|_2^2+\|h_1\|_2^2\neq 0
\right\}.
\end{equation}
 \end{lemma}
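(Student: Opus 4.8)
The plan is to reduce the computation to that of a single operator norm, as in Lemma~\ref{lem:KS1}, and then to diagonalise that operator explicitly by passing to Mellin coordinates. First I would prove the analogue of Lemma~\ref{lem:KS1} for the renormed space $\Hilbab$. One checks directly from~\eqref{eq:noncompact} that $\bar NAM$ acts unitarily on $\Hilbab$, because the operators representing $\bar N$, $A$ and $M$ preserve $\bigl|\,|r|^{\Rel}\hat f(r)\,\bigr|$ pointwise and hence preserve both integrals in the $\Hilbab$ norm, exactly as for $\Hilb_\Rel$ in Lemma~\ref{lem:KS1}. The Bruhat decomposition $G=\bar NAM\sqcup\bar NAM\,w\,\bar NAM$ then gives, exactly as in Lemma~\ref{lem:KS1}, that $\|\pi_{\la,\epsilon,a,b}\|_{\ub}=\|\pi_{\la,\epsilon}(w)\|_{\op}$, the operator norm now taken in the $\Hilbab$ norm; so it remains to compute $\|\pi_{\la,\epsilon}(w)\|_{\op}^2$.

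Next I would pass to coordinates adapted to $A$. Decompose $f\in C_c^\infty(\R)$ into its even and odd parts $f=f_0+f_1$, with $f_j$ of parity $j$, and identify even and odd functions with their restrictions to $(0,\infty)$. Taking the Fourier transform, weighting by $|r|^{\Rel}$, making the logarithmic change of variable and Fourier-transforming once more produces $h_0,h_1\in L^2(\R)$; equivalently, $h_j$ is a constant times the Mellin transform of $\hat f_j$ (restricted to $(0,\infty)$) on the line $\Re z=\Rel+\half$, which lies in the strip $0<\Re z<1$ precisely because $|\Rel|<\half$. The map $f\mapsto(h_0,h_1)$ is then a unitary of $\Hilb_\Rel$ onto $L^2(\R)\oplus L^2(\R)$ (that $\Hilb_\Rel$ is the \emph{whole} homogeneous Sobolev space, i.e.\ that $C_c^\infty(\R)$ is dense, again uses $|\Rel|<\half$). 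Combining the $\R^+$ and $\R^-$ contributions and using that $\hat f_0$ is even and $\hat f_1$ odd, one finds that $\|f\|_{\Hilbab}^2$ is a fixed positive multiple, depending only on $a$, $b$ and the normalisation $\|\hat f\|_2=\sqrt{2\pi}\,\|f\|_2$, of
\[
\|h_0\|_2^2+\|h_1\|_2^2+2\theta\,\Re\langle h_0,h_1\rangle_{L^2};
\]
this is a multiple of the denominator of~\eqref{norma_sigma}, and since $f\mapsto(h_0,h_1)$ is onto, the supremum in~\eqref{norma_sigma} is indeed a supremum over all of $\Hilb_\Rel$.

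The heart of the proof is the action of $\pi_{\la,\epsilon}(w)$ in these coordinates. From~\eqref{eq:weyl-action}, the inversion $t\mapsto-1/t$ together with the factor $\sgn^\epsilon(-t)$ shows that $\pi_{\la,\epsilon}(w)$ maps the parity-$j$ part of $f$ to the part of the same parity when $\epsilon=0$ and to the part of opposite parity when $\epsilon=1$; on a single parity component, restricted to $(0,\infty)$, it is the map $\phi(t)\mapsto\pm\,t^{2\la-1}\phi(1/t)$. In the Mellin transform of $\phi$ this is a shift by $2\la-1$ followed by the reflection $z\mapsto-z$, and passing from the Mellin transform of $\phi$ to that of $\hat f$ replaces the cosine or sine transform linking them by a factor $\Gamma(z)\cos(\pi z/2)$ or $\Gamma(z)\sin(\pi z/2)$, according to parities. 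Rewriting these two factors with the Legendre duplication formula for $\Gamma$ produces exactly $m_{0,\Rel}$ and $m_{1,\Rel}$ (the constants $\sqrt{2\pi}$, the power $2^{\Rel+iu}$ and the factor $i$ in $m_{1,\Rel}$ coming from the two Fourier normalisations and from duplication), while the reflections combine to the reflection $u\mapsto2\Iml-u$ forced by $wa_sw^{-1}=a_{s^{-1}}$. Thus $\pi_{\la,\epsilon}(w)$ carries $(h_0,h_1)$ to a pair whose two components are, up to that reflection, $m$-ratio multiples of $h_\epsilon$ and of $h_{1-\epsilon}$ respectively; absorbing the reflection by the substitution $v=2\Iml-u$ turns $\|\pi_{\la,\epsilon}(w)f\|_{\Hilbab}^2$ into the \emph{same} positive multiple as above of the numerator of~\eqref{norma_sigma}, the $L^2$-norms of the components becoming $\|h_\epsilon\,q_{0,\epsilon}\|_2^2$ and $\|h_{1-\epsilon}\,q_{1,\epsilon}\|_2^2$ and their inner product becoming $\langle h_\epsilon\,q_{0,\epsilon},\,h_{1-\epsilon}\,q_{1,\epsilon}\rangle_{L^2}$ (this is why the $q$'s carry the reflected $m$-functions, and why $q_{1,\epsilon}$ has a minus sign — it is needed for the cross term to come out correctly). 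Dividing, the common multiple cancels and $\|\pi_{\la,\epsilon}(w)\|_{\op}^2=\sup_{f\neq0}\|\pi_{\la,\epsilon}(w)f\|_{\Hilbab}^2/\|f\|_{\Hilbab}^2$ equals~\eqref{norma_sigma}; combined with the first step this is the lemma.

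\textbf{The main obstacle} is this third step: keeping accurate track of the reflection $u\mapsto2\Iml-u$, of all the multiplicative constants, and of all the signs (the parity bookkeeping and the minus sign entering $q_{1,\epsilon}$), together with justifying the cosine- and sine-transform Mellin identities and their analytic continuation up to the edges $|\Rel|=\half$. The first two steps are routine.
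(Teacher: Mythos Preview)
Your proposal is correct and follows essentially the same route as the paper. The paper also reduces to $\|\pi_{\la,\epsilon}(w)\|_{\op}$ on $\Hilbab$ via the Bruhat decomposition, writes $f$ in Mellin coordinates (it takes the Mellin transform on the $t$-side first, giving coefficients $c_0,c_1$, then computes $\|f\|_{\Hilbab}$ via the Fourier transforms of $|t|^{\Rel-\frac12+iu}$ and $\sgn(t)|t|^{\Rel-\frac12+iu}$, which is where the $m_j$ arise, quoted from tables rather than derived via duplication), computes the action of $w$ in the $c_j$-coordinates as the reflection $u\mapsto 2\Iml-u$ together with the parity swap governed by $\epsilon$, and finally substitutes $h_k=c_k m_k$ to obtain~\eqref{norma_sigma}; the minus sign in $q_{1,\epsilon}$ appears for exactly the reason you identify.
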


For brevity, when the dependence on the parameter $\Rel$ is not important,
we shall omit it in subscripts, for example, we shall simply write $m_0$ instead of $m_{0,\Rel}$.

\begin{proof}
First, we need an efficient way to compute $\|f\|_{\Hilbab}$.
We will decompose $f$ into its even and odd parts and use the Mellin transform.
We write
\[
f(t) = \int_{\R} c_0(u) \labs t\rabs^{\Rel -\half +iu} \,du + \int_{\R} c_1(u) \sgn(t) \labs t \rabs^{\Rel -\half +iu} \,du.
\]
As proved in~\cite[p.~160]{StnWei} and \cite[p.~173 formulae (12) and (13)]{GelShi}, the Fourier transform of
$\labs \cdot \rabs^{\Rel -\half +iu} $ is given by $m_0(u) \labs\cdot\rabs^{ -\Rel-\half  - iu} $,
where
\[
m_0(u)=m_{0,\Rel}(u)=\sqrt{2\pi}\,\, 2^{\Rel +iu} \frac{\Gamma\left(\frac14+\frac{\Rel+iu}2\right)}{\Gamma\left(\frac14-\frac{\Rel+iu}2\right)},
\]
and the Fourier transform of $\sgn(\cdot)\labs \cdot \rabs^{\Rel -\half +iu} $ is given by
$m_1(u) \labs\cdot\rabs^{ -\Rel-\half  - iu} \,\sgn(\cdot)$,
where
\[
m_1(u)=m_{1,\Rel}(u)=i\sqrt{2\pi}\,\, 2^{\Rel +iu} \frac{\Gamma\left(\frac34+\frac{\Rel+iu}2\right)}{\Gamma\left(\frac34-\frac{\Rel+iu}2\right)}.
\]
Therefore
\[
\begin{aligned}
\labs\xi\rabs^{\Rel} \hat f(\xi)
&= \labs\xi\rabs^{ -\half} \int_{\R} ( c_0(u) \fn m_0(u) + \sgn(\xi) \fn c_1(u) \, m_1(u) ) \labs\xi\rabs^{- iu} \,du .
\end{aligned}
\]

In light of the definition of $\lnorm \cdot \rnorm_{\Hilbab}$ above and  the Plancherel Theorem,
\[
\begin{aligned}
 \lnorm f \rnorm_{\Hilbab} ^2
 &=  a \int_0^{+\infty} \labs \int_{\R} ( c_0(u) \fn m_0(u) + \fn c_1(u) \, m_1(u) ) \labs\xi\rabs^{- iu} \,du \rabs ^2 \,\frac{d\xi}{\labs \xi\rabs} \\
 &\qquad
            + b \int_{-\infty}^0 \labs \int_{\R} ( c_0(u) \fn m_0(u) - \fn c_1(u) \, m_1(u) ) \labs\xi\rabs^{- iu} \,du \rabs ^2 \,\frac{d\xi}{\labs \xi\rabs} \\
 &=  a \int_{\R} \labs \int_{\R} ( c_0(u) \fn m_0(u) + \fn c_1(u) \, m_1(u) ) \,e^{- itu} \,du \rabs ^2 \,dt \\
 &\qquad
            + b \int_{\R} \labs \int_{\R} ( c_0(u) \fn m_0(u) - \fn c_1(u) \, m_1(u) ) \,e^{- itu} \,du \rabs ^2 \,dt \\
&= 2\pi\Bigglpar  a \int_{\R} \labs c_0(u) \fn m_0(u) + \fn c_1(u) \, m_1(u) \rabs ^2 \,du
\\
&\qquad
            + b \int_{\R} \labs c_0(u) \fn m_0(u) - \fn c_1(u) \, m_1(u) ) \rabs ^2 \,du \Biggrpar \\
&= 2\pi\Bigglpar  (a+b) \int_{\R} \labs c_0(u) \fn m_0(u) \rabs^2 + \labs c_1(u) \, m_1(u) \rabs ^2 \,du \\
&\qquad
  + 2(a-b) \int_{\R} \Re\lpar c_0(u) \fn m_0(u) \fn \bar c_1(u) \fn \bar m_1(u) ) \rpar \,du \Biggrpar .
\\
&= 2\pi (a+b) \Bigglpar
 \|c_0  \fn m_0 \|_2^2+\|c_1  \fn  m_1 \|_2^2
 +2\, \theta\, \Re \langle c_0  \fn m_0, c_1  \fn  m_1\rangle_{L^2}  \Biggrpar ,
\end{aligned}
\]
where $\theta = (a-b)/(a+b)$. 
Since $a$ and $b$ are positive, $-1 < \theta < 1$.

As in the proof of Lemma \ref{lem:KS1}, since $\bar NAM$ acts unitarily on $\Hilbab$,
\[
\| \pi_{\la, \epsilon,a,b} \|_{\ub} = \| \pi_{\la,\epsilon,a,b}(w) \|_{\op}.
\]
Suppose  that
\begin{align*}
f(t)
&= \int_{\R} c_0(u)\,\labs t \rabs^{\Rel -\half +iu} \,du + \int_{\R} c_1(u) \,\sgn(t) \, \labs t \rabs^{\Rel -\half +iu} \,du \\
&= \int_{\R} [ c_0(u) + c_1(u) \sgn(t)  ] \labs t \rabs^{\Rel -\half +iu} \,du.
\end{align*}
Then, from \eqref{eq:weyl-action} and linearity,
\[
\begin{aligned}
{} [\pi_{\la,\epsilon,a,b}(w) f](t)
&=  \int_{\R}  [ c_0(u) + c_1(u) \sgn(-1/t)  ] \sgn^\epsilon (-t) \, \labs t \rabs^{2\lambda - 1} \labs -1/ t \rabs^{\Rel -\half +iu} \,du  \\
&=  (-1)^\epsilon \int_{\R}  [ c_0(u) - c_1(u) \sgn(t) ] \sgn^\epsilon (t) \,\labs t \rabs^{2\lambda - 1 - (\Rel -\half +iu)} \,du  \\
&=  (-1)^\epsilon \int_{\R}  [ c_0(u) - c_1(u) \sgn(t) ] \sgn^\epsilon (t) \,\labs t \rabs^{\Rel -\half + i(2\beta - u)} \,du  \\
&=  (-1)^\epsilon \int_{\R}  [ c_0(2\beta - u) - c_1(2\beta - u) \sgn(t) ] \sgn^\epsilon (t) \,\labs t \rabs^{\Rel -\half + iu} \,du \\
&=  \int_{\R} [ c_\epsilon(2\beta - u) - c_{1-\epsilon} (2\beta - u) \sgn (t)  ] \labs t \rabs^{\Rel -\half + iu} \,du .
\end{aligned}
\]


Hence, when $f\neq 0$,
\[
\begin{aligned}
&
\frac{\lnorm \pi_{\la,\epsilon,a,b}(w) f \rnorm_{\Hilbab}^2}{\lnorm f \rnorm_{\Hilbab}^2}
\\
&\quad
=\frac{
\|c_\epsilon(2\Iml - \cdot)  \fn m_0 \|_2^2+\|c_{1-\epsilon}(2\Iml - \cdot)  \fn  m_1 \|_2^2
 -2\,  \theta\, \Re \langle c_\epsilon(2\Iml - \cdot)  \fn m_0, c_{1-\epsilon}(2\Iml - \cdot)  \fn  m_1\rangle_{L^2}
}
{
\|c_0  \fn m_0 \|_2^2+\|c_1  \fn  m_1 \|_2^2
 +2\, \theta\, \Re \langle c_0  \fn m_0, c_1  \fn  m_1\rangle_{L^2}
}
\\
&\quad
=\frac{
\| m_0 (2\Iml - \cdot)\fn c_\epsilon \|_2^2+\|   m_1 (2\Iml - \cdot)\fn c_{1-\epsilon} \|_2^2
 -2 \,\theta\,
 \Re \langle
 m_0(2\Iml - \cdot)\fn c_\epsilon
 ,
 m_1(2\Iml - \cdot)\fn c_{1-\epsilon}
 \rangle_{L^2}
}
{
\|c_0  \fn m_0 \|_2^2+\|c_1  \fn  m_1 \|_2^2
 +2\, \theta\, \Re \langle c_0  \fn m_0, c_1  \fn  m_1\rangle_{L^2}
} \,.
\end{aligned}
\]

To find the norm of  the operator $\pi_{\la,\epsilon,a,b}(w)$,
we need to take the supremum of the last expression as $f$
varies over $\Hilbab$.
The denominator of the last expression is the square of the $\Hilbab$-norm of $f$, so to take an arbitrary $f$ in $\Hilbab$, we may replace the function $c_km_k$ by $h_k$, and take arbitrary $h_k \in L^2(\R)$.
Hence
\[
\begin{aligned}
&
\phantom{=}
\sup\left\{
\frac{\lnorm \pi_{\la,\epsilon}(w) f \rnorm_{\Hilbab}^2
}{
\lnorm f \rnorm_{\Hilbab}^2}
\, :\,f\in \Hilbab\,, f\neq 0\right\}
\\
&=
\sup\left\{
\frac{
 \lnorm
h_\epsilon \,\quoz_{0,\epsilon}
 \rnorm_2^2
+ \lnorm
h_{1-\epsilon} \,\quoz_{1,\epsilon}
 \rnorm_2^2
+2 \theta\,\Re
\langle
h_\epsilon \, \quoz_{0,\epsilon} \, ,
h_{1-\epsilon} \, \quoz_{1,\epsilon}
\rangle_{L^2}
}{ \lnorm h_0
 \rnorm_2^2
+ \lnorm h_1
 \rnorm_2^2
+2\theta \Re
\langle h_0 \, , h_1
\rangle_{L^2}
}
\,:\,
\|h_0\|_2^2+\|h_1\|_2^2\neq 0\right\},
\end{aligned}
\]
where
\[
\quoz_{0,\epsilon}=\quoz_{0,\epsilon,\Rel+i\Iml}=\frac{m_{0,\Rel} (2\Iml -\cdot)}{m_{\epsilon,\Rel}}
\qquad \qquad
\quoz_{1,\epsilon}=\quoz_{1,\epsilon,\Rel+i\Iml}=-\frac{m_{1,\Rel} (2\Iml -\cdot)}{m_{1-\epsilon,\Rel}} \,.
\qedhere
\]
\end{proof}

\begin{corollary}\label{minimo}
For all $a,b \in \R^+$,
\[
\| \pi_{\la,\epsilon,a,b} \|_{\ub} \geq \| \pi_{\la,\epsilon} \|_{\ub} = \max\{ \lnorm \quoz_{0,\epsilon} \rnorm_\infty, \lnorm \quoz_{1,\epsilon} \rnorm_\infty \}.
\]
\end{corollary}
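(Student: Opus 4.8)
The plan is to extract the corollary from the variational formula \eqref{norma_sigma} of Lemma~\ref{moltiplicatore} by making two complementary observations: a lower bound for $\| \pi_{\la,\epsilon,a,b} \|_{\ub}$ valid for all $a,b$, and an exact evaluation when $a=b$ (that is, $\theta=0$). For the lower bound, I would test the supremum in \eqref{norma_sigma} on functions concentrated near a single point $u_0 \in \R$. Choosing $h_0, h_1$ supported in a small interval around $u_0$, the ratio in \eqref{norma_sigma} is approximately
\[
\frac{ |\quoz_{0,\epsilon}(u_0)|^2 \|h_\epsilon\|_2^2 + |\quoz_{1,\epsilon}(u_0)|^2 \|h_{1-\epsilon}\|_2^2 + 2\theta\,\Re\lpar \quoz_{0,\epsilon}(u_0)\overline{\quoz_{1,\epsilon}(u_0)} \langle h_\epsilon, h_{1-\epsilon}\rangle \rpar }{ \|h_0\|_2^2 + \|h_1\|_2^2 + 2\theta\,\Re\langle h_0,h_1\rangle } ,
\]
since $\quoz_{0,\epsilon}$, $\quoz_{1,\epsilon}$ are continuous. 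Taking $h_{1-\epsilon}=0$ (so only $h_\epsilon$ survives) makes this ratio equal $|\quoz_{0,\epsilon}(u_0)|^2$ exactly, and symmetrically taking $h_\epsilon = 0$ gives $|\quoz_{1,\epsilon}(u_0)|^2$. Letting $u_0$ range over $\R$ and using the definition of the sup norm yields $\| \pi_{\la,\epsilon,a,b} \|_{\ub}^2 \geq \max\{ \lnorm \quoz_{0,\epsilon}\rnorm_\infty^2,\ \lnorm \quoz_{1,\epsilon}\rnorm_\infty^2 \}$ for every $a,b \in \R^+$. One should check that the Gamma-quotient formulae for $m_{0,\Rel}, m_{1,\Rel}$ make $\quoz_{0,\epsilon}, \quoz_{1,\epsilon}$ genuinely bounded on $\R$ (their moduli tend to finite limits as $u \to \pm\infty$ by Stirling), so the right-hand side is finite and the $L^\infty$ norms are attained as suprema in the limit; the test functions need only realize values arbitrarily close to these suprema.

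For the reverse inequality in the case $a=b$, set $\theta=0$ in \eqref{norma_sigma}. The formula collapses to
\[
\| \pi_{\la,\epsilon} \|_{\ub}^2 = \sup\left\{ \frac{ \lnorm h_\epsilon\,\quoz_{0,\epsilon}\rnorm_2^2 + \lnorm h_{1-\epsilon}\,\quoz_{1,\epsilon}\rnorm_2^2 }{ \lnorm h_0\rnorm_2^2 + \lnorm h_1\rnorm_2^2 } : \|h_0\|_2^2 + \|h_1\|_2^2 \neq 0 \right\}.
\]
Since the roles of $\epsilon$ and $1-\epsilon$ just relabel which of $h_0, h_1$ is multiplied by which $\quoz$, the numerator is at most $\lnorm\quoz_{0,\epsilon}\rnorm_\infty^2\,\|h_\epsilon\|_2^2 + \lnorm\quoz_{1,\epsilon}\rnorm_\infty^2\,\|h_{1-\epsilon}\|_2^2 \leq \max\{\lnorm\quoz_{0,\epsilon}\rnorm_\infty^2, \lnorm\quoz_{1,\epsilon}\rnorm_\infty^2\}\,(\|h_0\|_2^2+\|h_1\|_2^2)$ by the pointwise bound $|\quoz_{k,\epsilon}(u)| \le \lnorm\quoz_{k,\epsilon}\rnorm_\infty$ applied inside each $L^2$ integral. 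This gives $\| \pi_{\la,\epsilon}\|_{\ub}^2 \le \max\{\lnorm\quoz_{0,\epsilon}\rnorm_\infty^2, \lnorm\quoz_{1,\epsilon}\rnorm_\infty^2\}$; combined with the lower bound from the previous paragraph (specialized to $a=b$), we get equality, and hence the stated formula $\| \pi_{\la,\epsilon}\|_{\ub} = \max\{\lnorm\quoz_{0,\epsilon}\rnorm_\infty, \lnorm\quoz_{1,\epsilon}\rnorm_\infty\}$. The chain $\| \pi_{\la,\epsilon,a,b}\|_{\ub} \geq \max\{\lnorm\quoz_{0,\epsilon}\rnorm_\infty, \lnorm\quoz_{1,\epsilon}\rnorm_\infty\} = \| \pi_{\la,\epsilon}\|_{\ub}$ then completes the corollary.

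The main obstacle is the lower bound: one has to confirm that testing on concentrated functions actually recovers $\lnorm\quoz_{k,\epsilon}\rnorm_\infty$ as a supremum rather than only the value $|\quoz_{k,\epsilon}(u_0)|$ at points of continuity — but since $\quoz_{0,\epsilon}, \quoz_{1,\epsilon}$ are continuous on $\R$, every value $|\quoz_{k,\epsilon}(u_0)|$ is attained and the essential supremum equals the ordinary supremum, so no subtlety arises beyond bookkeeping with approximate maximizers. A secondary point worth a sentence is that the denominator in \eqref{norma_sigma}, for $\theta \neq 0$, is still comparable to $\|h_0\|_2^2+\|h_1\|_2^2$ (it lies between $(1-|\theta|)$ and $(1+|\theta|)$ times it), so the supremum is over a nondegenerate quadratic form and the concentrated-function test is legitimate; but for the corollary we only need the clean $h_{1-\epsilon}=0$ and $h_\epsilon=0$ substitutions, where the $\theta$ cross-term vanishes identically and no such estimate is needed.
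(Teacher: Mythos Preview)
Your proposal is correct and follows essentially the same route as the paper: the lower bound comes from setting $h_{1-\epsilon}=0$ or $h_\epsilon=0$ in \eqref{norma_sigma} (which kills the $\theta$ cross-terms and reduces to the norm of a multiplication operator), and the equality at $a=b$ comes from the $\theta=0$ collapse of \eqref{norma_sigma}. The concentration-at-$u_0$ step you describe first is a harmless detour---once one of the $h_k$ is set to zero, the ratio is already $\lnorm h\,\quoz\rnorm_2^2/\lnorm h\rnorm_2^2$ and its supremum over $h\in L^2$ is $\lnorm \quoz\rnorm_\infty^2$ without any localisation; the paper simply invokes this directly.
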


\begin{proof}
On the one hand, from the cases where $h_0=0$ or $h_1=0$ in~\eqref{norma_sigma}, we see that
\begin{equation}\label{min=}
  \| \pi_{\la,\epsilon,a,b} \|_{\ub} \geq  \max\{ \lnorm \quoz_{0,\epsilon} \rnorm_\infty, \lnorm \quoz_{1,\epsilon} \rnorm_\infty \} \qquad \forall a,b \in \R^+.
\end{equation}
On the other hand, taking $a=b$ in~\eqref{norma_sigma}, we deduce that
\[
\| \pi_{\la,\epsilon,a,a} \|_\ub \leq  \max\{ \lnorm \quoz_{0,\epsilon} \rnorm_\infty, \lnorm \quoz_{1,\epsilon} \rnorm_\infty \},
\]
and we have already observed that $\| \pi_{\la,\epsilon,a,a} \|_\ub=\| \pi_{\la,\epsilon} \|_\ub$.
\end{proof}

This corollary says that the Kunze--Stein representation~$\pi_{\la,\epsilon}$ has minimal norm in the sense explained in the introduction.
We are now going to show that when $a\neq b$ the inequality in Corollary~\ref{minimo} is strict.
To do this, we will use the  following general lemma.

\begin{lemma} \label{minimumnorm}
Consider the Hilbert space $L^2(\R) \oplus L^2(\R)$, equipped with the norm
\footnoteomit{To see that this is actually a norm, the shortest way is to
write $\lnorm (h_0, h_1) \rnorm_{\theta}^2 = \|h_0+\theta h_1\|_2^2+(1-\theta^2)\|h_1\|_2^2$.
}

\[
\lnorm (h_0, h_1) \rnorm_{\theta} =
{ \lnorm h_0
 \rnorm_2^2
+ \lnorm h_1
 \rnorm_2^2
+2\theta \Re
\langle h_0 \, , h_1
\rangle_{L^2}
},
\]
where $-1 < \theta < 1$, and the linear operator $T : (h_0, h_1) \mapsto (\quoz_0h_0, \quoz_1h_1)$ on this space, where $\quoz_0, \quoz_1 \in L^\infty(\R)$.
If $\theta = 0$, then $\lnorm T\rnorm_{\op} = \max\{ \lnorm \quoz_0 \rnorm_\infty, \lnorm \quoz_1 \rnorm_\infty \}$.
If $\theta \neq 0$ and $\lnorm \quoz_0 \quoz_1 \rnorm _\infty< \max\{ \lnorm \quoz_0 \rnorm_\infty, \lnorm \quoz_1 \rnorm_\infty \}^2$, then $\lnorm T \rnorm_{\op} > \max\{ \lnorm \quoz_0 \rnorm_\infty, \lnorm \quoz_1 \rnorm_\infty \}$.
\end{lemma}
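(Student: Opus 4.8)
The plan is to compute $\lnorm T \rnorm_{\op}^2$ by reducing to a localised, finite-dimensional model. First note the useful identity $\lnorm (h_0, h_1) \rnorm_{\theta}^2 = \lnorm h_0 + \theta h_1 \rnorm_2^2 + (1-\theta^2) \lnorm h_1 \rnorm_2^2$, which exhibits the form as positive definite when $|\theta| < 1$ and lets us recover $\lnorm h_0 \rnorm_2, \lnorm h_1 \rnorm_2$ comparably. The case $\theta = 0$ is immediate: the norm is the orthogonal direct sum norm, $T$ acts diagonally, and $\lnorm T \rnorm_{\op} = \max\{\lnorm \quoz_0 \rnorm_\infty, \lnorm \quoz_1 \rnorm_\infty\}$. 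So assume $\theta \neq 0$. By the symmetry $\quoz_0 \leftrightarrow \quoz_1$ (which is implemented by swapping the two summands, an isometry of $\lnorm\cdot\rnorm_\theta$), we may assume $M := \lnorm \quoz_0 \rnorm_\infty = \max\{\lnorm \quoz_0 \rnorm_\infty, \lnorm \quoz_1 \rnorm_\infty\}$, and we want to produce $(h_0,h_1)$ with $\lnorm T(h_0,h_1) \rnorm_\theta > M \lnorm (h_0,h_1) \rnorm_\theta$.

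The key step is the test-function construction. Fix $\delta > 0$ small, and choose a set $E \subseteq \R$ of positive finite measure on which $|\quoz_0| > M - \delta$ (possible by definition of the sup norm). On $E$ the product satisfies $|\quoz_0 \quoz_1| \le \lnorm \quoz_0 \quoz_1 \rnorm_\infty < M^2$ by hypothesis, so $|\quoz_1| < M^2/(M-\delta)$ there; shrinking $\delta$, on $E$ we have $|\quoz_1| \le M - \eta$ for some $\eta > 0$ independent of $\delta$. Now take $h_0 = \mathbf{1}_E$ and $h_1 = \epsilon\, \overline{\sgn(\quoz_0/\quoz_1)}\, \mathbf{1}_E$ (or simply $h_1 = \epsilon\, c\, \mathbf{1}_E$ with a unimodular constant $c$ chosen to align phases) for a small real parameter $\epsilon > 0$ to be optimised, with the sign of $\epsilon$ chosen so that $\theta \Re\langle h_0, h_1\rangle$ has the favourable sign. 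Then $\lnorm (h_0,h_1) \rnorm_\theta^2 = |E|(1 + \epsilon^2 + 2\theta \epsilon \Re c)$ and $\lnorm T(h_0,h_1) \rnorm_\theta^2 = \int_E |\quoz_0|^2 + \epsilon^2 \int_E |\quoz_1|^2 + 2\theta\epsilon \Re \int_E \quoz_0 \overline{\quoz_1 c}$. Expanding the ratio $\lnorm T(h_0,h_1) \rnorm_\theta^2 / \lnorm (h_0,h_1) \rnorm_\theta^2$ as a function of $\epsilon$ near $\epsilon = 0$: the value at $\epsilon = 0$ is $|E|^{-1}\int_E |\quoz_0|^2 \le M^2$, but the linear term in $\epsilon$ has coefficient $2\theta\big(|E|^{-1}\Re\int_E \quoz_0\overline{\quoz_1 c} - |E|^{-1}\int_E|\quoz_0|^2 \cdot \Re c\big)$, and by choosing the phase $c$ and the sign of $\epsilon$ appropriately we can force this linear coefficient to be strictly positive (here one uses $\theta \neq 0$ crucially). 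Hence for small $\epsilon$ the ratio strictly exceeds $|E|^{-1}\int_E|\quoz_0|^2$.

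The remaining issue is that $|E|^{-1}\int_E|\quoz_0|^2$ may be slightly below $M^2$; this is handled by the two-parameter choice: first pick $E$ with $|\quoz_0| > M-\delta$ on $E$ so that $|E|^{-1}\int_E|\quoz_0|^2 > (M-\delta)^2$, then the linear-in-$\epsilon$ gain is of size $\gtrsim |\theta|\,\epsilon$ with a constant controlled by the gap $\eta$ between $M$ and $\sup_E|\quoz_1|$, while the quadratic correction is $O(\epsilon^2)$; choosing first $\delta$ then $\epsilon$ small enough makes $(M-\delta)^2 + c|\theta|\epsilon - O(\epsilon^2) > M^2$. The main obstacle is precisely this bookkeeping — ensuring the first-order gain from the cross term dominates both the deficit $M^2 - (M-\delta)^2$ and the second-order loss — and verifying that the phase alignment making the cross term productive is compatible with keeping the denominator $1 + \epsilon^2 + 2\theta\epsilon\Re c$ bounded away from zero (automatic since $|\theta| < 1$). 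Once the orders of the two small parameters are fixed in the right sequence, the strict inequality $\lnorm T \rnorm_{\op} > M$ follows.
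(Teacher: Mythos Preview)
Your approach matches the paper's: normalise to $M=\lnorm q_0\rnorm_\infty$, test with $(\chi_E,\epsilon\,\chi_E)$, and expand the Rayleigh quotient to first order in $\epsilon$. The phase factor $c$ is an unnecessary complication; taking $c=1$ and choosing the sign of $\epsilon$ so that $\theta\epsilon<0$ already makes the first-order term positive, since the derivative at $\epsilon=0$ is $2\theta\bigl(|E|^{-1}\Re\!\int_E q_0\bar q_1 - |E|^{-1}\!\int_E|q_0|^2\bigr)$ and the bracket is strictly negative once $|q_0|$ is close to $M$ on $E$.

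There is, however, a real gap in the final bookkeeping. You write ``choosing first $\delta$ then $\epsilon$ small enough makes $(M-\delta)^2+c|\theta|\epsilon-O(\epsilon^2)>M^2$'', but that order of limits fails: with $\delta>0$ fixed and $\epsilon\to0$ the left side tends to $(M-\delta)^2<M^2$. The linear gain $c|\theta|\epsilon$ has to beat the deficit $M^2-(M-\delta)^2\approx 2M\delta$, so $\delta$ must be taken small \emph{after} (or faster than) $\epsilon$. One fix is to reverse the order: first pick $\epsilon$ small enough that the linear term dominates the $O(\epsilon^2)$ correction, giving a fixed positive surplus, and only then shrink $\delta$. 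The paper resolves this more cleanly by tying the two parameters together: it takes $E=E(t)=\{u:|q_0(u)|>M-t^2\}$ and $\epsilon=t$, so that the deficit is $O(t^2)$ and is absorbed into the quadratic remainder. With $M=1$ this yields
\[
\frac{\lnorm T(\chi_{E(t)},t\chi_{E(t)})\rnorm_\theta}{\lnorm(\chi_{E(t)},t\chi_{E(t)})\rnorm_\theta}
\;\ge\; 1-\theta t+\theta t\cdot\frac{\Re\int_{E(t)}q_0\bar q_1}{|E(t)|}+O(t^2),
\]
and since the averaged cross term has modulus at most $\lnorm q_0q_1\rnorm_\infty<1$, the coefficient of $t$ is nonzero; taking $\sgn t=-\sgn\theta$ makes it positive and the strict inequality follows for $|t|$ small.
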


\begin{proof}  
Without loss of generality, we suppose that  $\lnorm \quoz_0 \rnorm_\infty = \max\{ \lnorm \quoz_0 \rnorm_\infty, \lnorm \quoz_1 \rnorm_\infty \} = 1$. 
Taking  first  $h_0=0$ and then $h_1=0$  we see that $\lnorm T \rnorm_{\op}  \geq 1$.

When $\theta=0$, it is evident that $\lnorm T \rnorm_{\op}  = 1$.
Indeed,
\[
{ \lnorm \quoz_0\, h_0
 \rnorm_2^2
+ \lnorm \quoz_1\,h_1
 \rnorm_2^2
}
\leq \,  \lnorm h_0
 \rnorm_2^2
+ \lnorm h_1
 \rnorm_2^2,
\]
whence  $\lnorm T \rnorm_{\op}  \leq  1$.

%

Suppose now $\theta \neq 0$ and  $\lnorm \quoz_0 \quoz_1 \rnorm_\infty = 1 - \epsilon$, where $\epsilon > 0$.  
We consider $T(\chi_{E(t)}, t\chi_{E(t)})$ when $t$ is very small, and $E(t)$ is chosen such that $\labs \quoz_0(u) \rabs > 1 - t^2$ for all $u$ in $E(t)$.
On the one hand,
\[
\begin{aligned}
\lnorm (\chi_{E(t)}, t\chi_{E(t)}) \rnorm_\theta
&= \lpar \int_{\R} \chi_{E(t)} (u) \,du + \int_{\R} t^2 \chi_{E(t)} (u) \,du +
2\theta \int_{\R} t \chi_{E(t)} (u) \,du \rpar^{1/2} \\
&= \lnorm \chi_{E(t)} \rnorm_2 (1 + 2 \theta t + t^2)^{1/2} \\
&= \lnorm \chi_{E(t)} \rnorm_2 (1 + \theta t + O(t^2)) ,
\end{aligned}
\]
so
\[
\frac{1}{ \lnorm (\chi_{E(t)}, t\chi_{E(t)}) \rnorm_\theta} = \frac{ 1 - \theta t + O(t^2) } { \lnorm \chi_{E(t)} \rnorm_2 } .
\]
On the other hand, $\lnorm (\quoz_0 \chi_{E(t)}, t \quoz_1\chi_{E(t)}) \rnorm_\theta$ is equal to
\[
\begin{aligned}
& \lpar \int_{\R} \labs \quoz_0(u)\rabs ^2 \fn\chi_{E(t)} (u) \,du + \int_{\R} t^2 \labs \quoz_1(u)\rabs ^2 \fn\chi_{E(t)} (u) \,du + 2\theta \Re \int_{\R} t \fn \quoz_0(u) \fn \bar \quoz_1(u) \fn \chi_{E(t)} (u) \,du \rpar^{1/2} .
\end{aligned}
\]
Now
\[
\int_{\R} \labs \quoz_0(u)\rabs ^2 \fn\chi_{E(t)} (u) \,du = (1 + O(t^2)) \lnorm \chi_{E(t)} \rnorm_2^2,
\]
 while
\[
\int_{\R} t^2 \labs \quoz_1(u)\rabs ^2 \fn\chi_{E(t)} (u) \,du = O(t^2) \lnorm \chi_{E(t)} \rnorm_2^2.
\]
It follows that
\begin{equation}\label{e:asterisk}
\frac{ \lnorm (\quoz_0 \chi_{E(t)}, t \quoz_1\chi_{E(t)}) \rnorm_\theta } { \lnorm (\chi_{E(t)}, t \chi_{E(t)}) \rnorm_\theta }
\geq 1 - \theta t + \theta t \,\frac{\ds\Re \int_{\R} \quoz_0(u) \fn \bar \quoz_1(u) \fn \chi_{E(t)} (u) \,du }{\ds \int_{\R} \chi_{E(t)} (u) \,du } +O(t^2) .
\end{equation}
Since
\[
\labs \frac{\ds \int_{\R} \quoz_0(u) \fn \bar \quoz_1(u) \fn \chi_{E(t)} (u) \,du }{\ds \int_{\R} \chi_{E(t)} (u) \,du }\rabs
\leq 1 - \epsilon ,
\]
when $t$ is very small and $t \theta < 0$, the right hand side of \eqref{e:asterisk} is strictly greater than $1$, so $\lnorm T \rnorm_{\op}  > 1$.
\end{proof}

In light of Lemma \ref{minimumnorm}, we need to analyse the functions $\quoz_{0,\epsilon}$, $\quoz_{1,\epsilon}$ defined in Lemma~\ref{moltiplicatore}.

\begin{lemma}\label{q0q1}
Suppose that $0<|\Rel|<\frac12$ and  $\epsilon+\Iml^2\neq 0$, and define $\quoz_{0,\epsilon}$ and $\quoz_{1,\epsilon}$ as in Lemma~\ref{moltiplicatore}.
Then
\[
\lnorm \quoz_{0,\epsilon}\, \quoz_{1,\epsilon} \rnorm_\infty
< \max\{ \lnorm \quoz_{0,\epsilon} \rnorm_\infty^2, \lnorm \quoz_{1,\epsilon} \rnorm_\infty^2 \}.
\]
\end{lemma}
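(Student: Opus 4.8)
The plan is to reduce the two supremum norms $\|\quoz_{0,\epsilon}\|_\infty$, $\|\quoz_{1,\epsilon}\|_\infty$ and the norm of the product to suprema of ratios of two explicit one-variable functions, and then run an elementary squeeze argument. Write $\phi=|m_{0,\Rel}|^2$ and $\psi=|m_{1,\Rel}|^2$; these are even, continuous and strictly positive on $\R$, since for $|\Rel|<\tfrac12$ the arguments $\tfrac14\pm\tfrac{\Rel+iu}2$ and $\tfrac34\pm\tfrac{\Rel+iu}2$ all have positive real part, so the corresponding values of $\Gamma$ are finite and non-zero (in particular the $m_{j,\Rel}$, hence the $\quoz_{j,\epsilon}$, are continuous and non-vanishing). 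First I would establish two identities. The reflection formula $\Gamma(z)\Gamma(1-z)=\pi/\sin\pi z$, applied to the pairs $\bigl(\tfrac14+\tfrac{\Rel+iu}2,\tfrac34-\tfrac{\Rel+iu}2\bigr)$ and $\bigl(\tfrac14-\tfrac{\Rel+iu}2,\tfrac34+\tfrac{\Rel+iu}2\bigr)$, together with $|\sin(x+iy)|^2=\tfrac12(\cosh2y-\cos2x)$, gives
\[
\frac{\phi(u)}{\psi(u)}=\frac{\cosh\pi u-\sin\pi\Rel}{\cosh\pi u+\sin\pi\Rel}=:R(u);
\]
the Legendre duplication formula, applied to $\Gamma\bigl(\tfrac14+\tfrac{\Rel+iu}2\bigr)\Gamma\bigl(\tfrac34+\tfrac{\Rel+iu}2\bigr)$ and its mirror image, gives
\[
\phi(u)\,\psi(u)=(2\pi)^2\,\frac{|\Gamma(\tfrac12+\Rel+iu)|^2}{|\Gamma(\tfrac12-\Rel+iu)|^2}=:(2\pi)^2\,S(u).
\]
Since $0<|\Rel|<\tfrac12$ we have $0<|\sin\pi\Rel|<1$, so $R$ is positive, even, strictly monotone on $[0,\infty)$, with $R<1$ everywhere when $\Rel>0$ and $R>1$ everywhere when $\Rel<0$; in particular $R$ never equals $1$, and $R(a)=R(b)$ forces $|a|=|b|$. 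Stirling's formula gives $S(u)\sim|u|^{4\Rel}$ as $|u|\to\infty$, so $S$ is even, positive, not periodic, and $S(2\Iml-u)/S(u)\to1$ as $u\to\pm\infty$.

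Next I would translate these into the quantities of interest. Substituting the definitions of $\quoz_{0,\epsilon},\quoz_{1,\epsilon}$ from Lemma~\ref{moltiplicatore} and using $\phi,\psi>0$, one obtains, for both $\epsilon=0$ and $\epsilon=1$,
\[
|\quoz_{0,\epsilon}(u)\,\quoz_{1,\epsilon}(u)|^2=\frac{S(2\Iml-u)}{S(u)},
\]
together with the ratio identities
\[
\frac{|\quoz_{0,0}(u)|^2}{|\quoz_{1,0}(u)|^2}=\frac{R(2\Iml-u)}{R(u)},\qquad
\frac{|\quoz_{0,1}(u)|^2}{|\quoz_{1,1}(u)|^2}=R(2\Iml-u)\,R(u).
\]
Put $h(u)=S(2\Iml-u)/S(u)$, so $\|\quoz_{0,\epsilon}\,\quoz_{1,\epsilon}\|_\infty^2=\sup_u h(u)$. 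Then $h$ is continuous and positive, tends to $1$ at $\pm\infty$, and satisfies $h(u)\,h(2\Iml-u)=1$; when $\Iml\neq0$, $h$ is not identically $1$ (an even, non-periodic function cannot be symmetric about $\Iml\neq0$), so $h$ takes a value $>1$ and attains its supremum at a finite point $u_*$, with $h(u_*)>1$.

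For the squeeze: one always has
\[
\|\quoz_{0,\epsilon}\,\quoz_{1,\epsilon}\|_\infty\ \le\ \|\quoz_{0,\epsilon}\|_\infty\,\|\quoz_{1,\epsilon}\|_\infty\ \le\ \max\{\|\quoz_{0,\epsilon}\|_\infty,\|\quoz_{1,\epsilon}\|_\infty\}^2,
\]
so it is enough to rule out the case in which both inequalities are equalities, i.e.\ in which $\|\quoz_{0,\epsilon}\|_\infty=\|\quoz_{1,\epsilon}\|_\infty=:L$ and $\|\quoz_{0,\epsilon}\,\quoz_{1,\epsilon}\|_\infty=L^2$. Suppose first $\Iml\neq0$; then $\sup_u h=L^4>1$, and evaluating at the maximiser $u_*$ gives $|\quoz_{0,\epsilon}(u_*)\,\quoz_{1,\epsilon}(u_*)|=L^2$ with each factor at most $L$, hence $|\quoz_{0,\epsilon}(u_*)|=|\quoz_{1,\epsilon}(u_*)|$. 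For $\epsilon=0$ this reads $R(2\Iml-u_*)=R(u_*)$, so $|2\Iml-u_*|=|u_*|$, so $u_*=\Iml$ (since $\Iml\neq0$); but then $h(u_*)=S(\Iml)/S(\Iml)=1$, contradicting $h(u_*)>1$. For $\epsilon=1$ it reads $R(2\Iml-u_*)\,R(u_*)=1$, impossible since $R$ stays strictly on one side of $1$. Suppose instead $\Iml=0$; then $\epsilon=1$ by the hypothesis $\epsilon+\Iml^2\neq0$, and (using the evenness of $\phi$) $|\quoz_{0,1}(u)|^2=R(u)$, $|\quoz_{1,1}(u)|^2=R(u)^{-1}$, so $\|\quoz_{0,1}\|_\infty^2=\sup_uR(u)$ and $\|\quoz_{1,1}\|_\infty^2=\sup_uR(u)^{-1}$; since $R$ lies strictly on one side of $1$ and tends to $1$ at $\pm\infty$, one of these suprema equals $1$ and the other exceeds $1$, so $\|\quoz_{0,1}\|_\infty\neq\|\quoz_{1,1}\|_\infty$ and the second inequality in the chain is strict. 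This covers all cases permitted by the hypotheses.

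The closed forms for $R$ and $S$ and the identities relating the $\quoz_{j,\epsilon}$ to them are routine computations with the reflection and duplication formulas. The step I expect to be most delicate is the last one: one has to pin down the common maximiser $u_*$ of $|\quoz_{0,\epsilon}\,\quoz_{1,\epsilon}|$ using that $h\to1$ at infinity while $\sup h>1$ when $\Iml\neq0$, and then notice that the equality case forces $|\quoz_{0,\epsilon}(u_*)|=|\quoz_{1,\epsilon}(u_*)|$, which the ratio identities contradict precisely because $\Rel\neq0$ (and, when $\epsilon=0$, because $\Iml\neq0$) — this is where the hypotheses $0<|\Rel|<\tfrac12$ and $\epsilon+\Iml^2\neq0$ come in.
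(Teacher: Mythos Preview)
Your proof is correct and shares its core with the paper's: both hinge on the reflection-formula identity giving
\[
\frac{|m_{1,\Rel}(u)|^2}{|m_{0,\Rel}(u)|^2}=\frac{\cosh\pi u+\sin\pi\Rel}{\cosh\pi u-\sin\pi\Rel}
\]
(your $1/R$, the paper's $|t_\Rel|^2$), which never equals $1$ when $\Rel\neq0$, and both work at a point where $|q_{0,\epsilon}q_{1,\epsilon}|$ is maximised. The packaging differs. The paper argues directly: at the maximiser $\bar u$ it writes $|q_{0,\epsilon}(\bar u)q_{1,\epsilon}(\bar u)|$ as $|q_{j,\epsilon}(\bar u)|^2$ times a factor strictly on one side of $1$, picking the appropriate $j$; for $\epsilon=0$ it must treat the exceptional sub-case $\bar u=\Iml$ separately, proving $\|q_{0,0}\|_\infty>1$ via non-periodicity of $|m_0|$. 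You instead run a contradiction through the chain $\|q_0q_1\|_\infty\le\|q_0\|_\infty\|q_1\|_\infty\le\max\{\|q_0\|_\infty,\|q_1\|_\infty\}^2$, which in the equality case forces $|q_0(u_*)|=|q_1(u_*)|$ at the maximiser and is then ruled out by your ratio identities. Your auxiliary function $S$ (from the duplication formula) and its non-periodicity play the same role as the paper's non-periodicity step, but placed earlier; as a bonus your organisation absorbs the $\bar u=\Iml$ sub-case for $\epsilon=0$ automatically, since $u_*=\Iml$ gives $h(u_*)=1$ while $\sup h>1$. The trade-off: you introduce $S$, which the paper avoids, but you sidestep the separate $\|q_{0,0}\|_\infty>1$ argument.
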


\begin{proof}
Define $t_\Rel(u)=i\,\tan(\pi(\tfrac14+\tfrac{\Rel+iu}{2}))$.
Then by the reflection formula (see~\cite[p.~3, formula (6)]{Erd}),
\begin{align*}
m_{1,\Rel}(u)&=i\sqrt{2\pi}\,\, 2^{\Rel +iu} \,
\frac{\Gamma\left(\frac34+\frac{\Rel+iu}2\right)}{\Gamma\left(\frac34-\frac{\Rel+iu}2\right)}
\\
&=i\sqrt{2\pi}\,\, 2^{\Rel +iu} \,
\frac{\Gamma\left(\frac14+\frac{\Rel+iu}2\right)\, \sin\left(\pi\left(\frac34-\frac{\Rel+iu}2\right)\right)
}{
\Gamma\left(\frac14-\frac{\Rel+iu}2\right)\, \sin\left(\pi\left(\frac34+\frac{\Rel+iu}2\right)\right)
}
\\
&=m_{0,\Rel}(u)\,t_\Rel(u).
\end{align*}

We now prove the desired inequality. 
Note that
$
\lim_{u\to\infty}|\quoz_{0,\epsilon}(u)\,\quoz_{1,\epsilon}(u)|=1
$,
so
\[
\|\quoz_{0,\epsilon}\,\quoz_{1,\epsilon}\|_\infty \geq 1.
\]
Since $\quoz_{0,\epsilon}\,\quoz_{1,\epsilon}$ is a continuous function and $|\quoz_{0,\epsilon}\,\quoz_{1,\epsilon}(\Iml)|=1$, there exists $\bar u$ in $\R$ such that
\[
\|\quoz_{0,\epsilon}\,\quoz_{1,\epsilon}\|_\infty=|\quoz_{0,\epsilon}(\bar u)\quoz_{1,\epsilon}(\bar u)|.
\]
Suppose now that $\epsilon=1$ and write
\begin{align*}
|\quoz_{0,1}(\bar u)\quoz_{1,1}(\bar u)|
&=|\quoz_{0,1}(\bar u)|^2\, |t_\Rel(2\Iml-\bar u)\, t_\Rel(\bar u)|
\\
|\quoz_{0,1}(\bar u)\quoz_{1,1}(\bar u)|
&=|\quoz_{1,1}(\bar u)|^2\, \frac{1}{|t_\Rel(2\Iml-\bar u)\, t_\Rel(\bar u)|}.
\end{align*}
Since
$\displaystyle
|\tan(x+iy)|^2=\frac{\cosh(2y)-\cos(2x)}{\cosh(2y)+\cos(2x)},
$
we obtain that
\[
|t_\Rel(u)|^2=\frac{\cosh(\pi u)+\sin(\pi\Rel)}{\cosh(\pi u)-\sin(\pi\Rel)}\qquad \forall u\in\R ,
\]
therefore $|t_\Rel(u)|>1$ when $\Rel>0$ and $|t_\Rel(u)|<1$ when $\Rel<0$.
We easily conclude that
\[
\lnorm \quoz_{0,1}\, \quoz_{1,1} \rnorm_\infty
<
 \max\{ \lnorm \quoz_{0,1} \rnorm_\infty^2, \lnorm \quoz_{1,1} \rnorm_\infty^2 \}.
\]
In the case where $\epsilon=0$, we  write
\begin{align*}
\quoz_{0,0}(\bar u)\, \quoz_{1,0}(\bar u)
&=\left(\frac{m_{0,\Rel} (2\Iml-\bar u)}{m_{0,\Rel}(\bar u)}\right)^{\!2}\,
\frac{t_{\Rel} (2\Iml-\bar u)}{t_{\Rel}(\bar u)}
=\quoz_{0,0}^2(\bar u)\,\frac{t_{\Rel} (2\Iml-\bar u)}{t_{\Rel}(\bar u)}
\\
\quoz_{0,0}(\bar u)\, \quoz_{1,0}(\bar u)&=\left(\frac{m_{1,\Rel} (2\Iml-\bar u)}{m_{1,\Rel}(\bar u)}\right)^{\!2}\,
\frac{t_{\Rel} (\bar u)}{t_{\Rel}(2\Iml-\bar u)}
=\quoz_{1,0}^2(\bar u)\,\frac{t_{\Rel} (\bar u)}{t_{\Rel}(2\Iml-\bar u)} \,,
\end{align*}
and note that
\begin{align*}
\left|\frac{t_{\Rel} (2\Iml-u)}{t_{\Rel}(u)}\right|^2
&=\frac{\cosh(\pi(2\Iml-u))+\sin(\pi\Rel)}{\cosh(\pi(2\Iml-u))-\sin(\pi\Rel)}
\,\frac{\cosh(\pi u)-\sin(\pi\Rel)}{\cosh(\pi u)+\sin(\pi\Rel)}
\\
&=\frac{1+
\dfrac{2\sinh(\pi\Iml)\sinh(\pi(\Iml-u))}{\cosh(\pi u)+\sin(\pi\Rel)}
}{1+
\dfrac{2\sinh(\pi\Iml)\sinh(\pi(\Iml-u))}{\cosh(\pi u)-\sin(\pi\Rel)}
} \,.
\end{align*}
Suppose that $\Rel\in (0,\frac12)$ and $\Iml>0$, so that $\sin(\pi\Rel)>0$.
Then the fraction
\[
\left|\frac{t_{\Rel} (2\Iml-\bar u)}{t_{\Rel}(\bar u)}\right|
\]
is less than $1$ if $\Iml-\bar u>0$, is equal to $1$ if $\Iml-\bar u=0$, and is greater than $1$ if $\Iml-\bar u<0$.
Therefore if $\bar u\neq\Iml$,
\[
\lnorm \quoz_{0,0}\, \quoz_{1,0} \rnorm_\infty
<  \max\{ \lnorm \quoz_{0,0} \rnorm_\infty^2, \lnorm \quoz_{1,0} \rnorm_\infty^2 \}.
\]
Finally,
if $\bar u=\Iml$, we conclude that $\lnorm \quoz_{0,0}\, \quoz_{1,0} \rnorm_\infty=1$.
We claim that
\begin{equation*}\label{e:nonunit}
\|q_{0,0}\|_\infty>1,
\end{equation*}
from which the result follows easily.
To prove the claim, suppose that $|\quoz_{0,0}(u)|=1$ for all $u\in \R$. 
Then
\[
|m_0(2\Iml-u)|=|m_0(u)|=|m_0(-u)| \qquad \forall u\in \R ,
\]
that is, $|m_0|$ is $2\Iml$-periodic.
But if $\Rel>0$, $\lim_{u\to \infty}|m_0(u)|=\lim_{u\to \infty}\sqrt{2\pi}2^\Rel |u|^{\Rel}=+\infty$, which contradicts the periodicity of $|m_0|$.
Therefore there exists some $\tilde u$ such that $|\quoz_{0,0}(\tilde u)|\neq 1$.
Note that
\[
\quoz_{0,0}(u)=\frac{1}{\quoz_{0,0}(2\Iml -u)},
\]
therefore we conclude that
\[
\|\quoz_{0,0}\|_\infty\geq \max\{|\quoz_{0,0}(\tilde u)|, |\quoz_{0,0}(2\Iml-\tilde u)|\}
=\max\{|\quoz_{0,0}(\tilde u)|, 1/|\quoz_{0,0}(\tilde u)|\}>1.
\]
The cases where $\Iml<0$ or $\Rel<0$ may be treated similarly.
\end{proof}

\begin{corollary}\label{minstretto}
Suppose that  $a\not=b$, that $|\Rel|<\frac12$
and that $(\la,\epsilon)\neq (0,1)$.
Then
\begin{equation}\label{eq:strict-inequality}
\lnorm\pi_{\la,\epsilon,a,b} \rnorm_{\ub}>\lnorm\pi_{\la,\epsilon} \rnorm_{\ub}.
\end{equation}
\end{corollary}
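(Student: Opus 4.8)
The plan is to derive the corollary by feeding the strict inequality of Lemma~\ref{q0q1} into the perturbation criterion of Lemma~\ref{minimumnorm}, with Lemma~\ref{moltiplicatore} identifying $\|\pi_{\la,\epsilon,a,b}\|_{\ub}$ as an operator norm of the shape treated in Lemma~\ref{minimumnorm} and Corollary~\ref{minimo} identifying $\|\pi_{\la,\epsilon}\|_{\ub}$ with the corresponding supremum-norm maximum. Set $\theta=(a-b)/(a+b)$; since $a\neq b$, we have $\theta\in(-1,1)\setminus\{0\}$. One subtlety is that Lemma~\ref{q0q1} requires $0<|\Rel|<\tfrac12$ and $\epsilon+\Iml^2\neq0$, so I would first handle those parameters and then dispose separately of the ones it omits; these omitted cases are the main (indeed, only) point needing extra thought.

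Suppose first that $0<|\Rel|<\tfrac12$ and $\epsilon+\Iml^2\neq0$. By Lemma~\ref{moltiplicatore}, $\|\pi_{\la,\epsilon,a,b}\|_{\ub}^2$ equals the supremum in~\eqref{norma_sigma}, which is exactly the square of the operator norm $\|T\|_{\op}$ of the operator $T$ of Lemma~\ref{minimumnorm} formed from the multiplier pair $\quoz_{0,\epsilon},\quoz_{1,\epsilon}$ and the present $\theta$; when $\epsilon=1$ the roles of $h_0$ and $h_1$ in~\eqref{norma_sigma} are interchanged, but this is harmless because Lemma~\ref{minimumnorm} is symmetric in its two multipliers. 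Lemma~\ref{q0q1} supplies precisely the hypothesis $\|\quoz_{0,\epsilon}\,\quoz_{1,\epsilon}\|_\infty<\max\{\|\quoz_{0,\epsilon}\|_\infty,\|\quoz_{1,\epsilon}\|_\infty\}^2$ of the second alternative of Lemma~\ref{minimumnorm} (its other hypothesis, $\theta\neq0$, holds since $a\neq b$), so $\|T\|_{\op}>\max\{\|\quoz_{0,\epsilon}\|_\infty,\|\quoz_{1,\epsilon}\|_\infty\}$. By Corollary~\ref{minimo} the right-hand side is $\|\pi_{\la,\epsilon}\|_{\ub}$, and taking square roots in Lemma~\ref{moltiplicatore} yields~\eqref{eq:strict-inequality} in this case.

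It remains to treat the parameters outside the scope of Lemma~\ref{q0q1}: either $\Rel=0$ (with $(\la,\epsilon)\neq(0,1)$), or $\la=\Rel\in(-\tfrac12,\tfrac12)\setminus\{0\}$ with $\epsilon=0$. In each such case $\pi_{\la,\epsilon}$ is irreducible and acts isometrically on $\Hilb_\Rel$ --- it is a unitary principal series or a complementary series representation, as described in Section~\ref{sec:sl2r} --- so $\|\pi_{\la,\epsilon}\|_{\ub}=1$ and it suffices to show $\|\pi_{\la,\epsilon,a,b}\|_{\ub}>1$. I would argue by contradiction: if $\|\pi_{\la,\epsilon,a,b}\|_{\ub}=1$, then every $\pi_{\la,\epsilon,a,b}(x)$ is isometric, hence unitary, on $\Hilbab$, and $\pi_{\la,\epsilon,a,b}$ is irreducible, being a renorming of $\pi_{\la,\epsilon}$. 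The identity map $\Hilb_\Rel\to\Hilbab$ is a bounded intertwiner with bounded inverse between these two irreducible unitary representations, so by Lemma~\ref{lemma:corol-of-Schur} there is $c>0$ with $\|f\|_{\Hilbab}=c\,\|f\|_{\Hilb_\Rel}$ for all $f$; evaluating this on functions whose Fourier transform vanishes on $\R^-$, and then on $\R^+$, forces $a=c^2=b$, contradicting $a\neq b$. This completes the proof. The real work is entirely contained in Lemmas~\ref{minimumnorm} and~\ref{q0q1} (the perturbation argument and the Gamma- and hyperbolic-function estimates); the only genuinely new step here is the parameter bookkeeping, together with the elementary Schur-lemma argument for the residual cases.
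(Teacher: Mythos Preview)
Your proof is correct and follows essentially the same route as the paper: the generic parameters are handled by combining Lemmas~\ref{moltiplicatore}, \ref{minimumnorm} and~\ref{q0q1} with Corollary~\ref{minimo}, and the residual unitary-irreducible cases are settled by the Schur-lemma argument via Lemma~\ref{lemma:corol-of-Schur}. Your version is slightly more explicit (noting the $h_0\leftrightarrow h_1$ swap when $\epsilon=1$ and spelling out why $a=c^2=b$), but the structure is the same.
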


\begin{proof}
In the case where $\Rel\neq 0$ and $\epsilon+\Iml^2\neq 0$, the inequality follows from  Lemmas~\ref{moltiplicatore},  \ref{minimumnorm} and \ref{q0q1}.
In the remaining cases, the representation $\pi_{\la,\epsilon}$ is unitary and irreducible.
If equality holds in \eqref{eq:strict-inequality}, then $\pi_{\la,\epsilon,a,b}$ must also be unitary.
Then the identity map from $\Hilb_\Rel$ to $\Hilbab$, which is a similarity, is a multiple of a unitary operator, by Lemma~\ref{lemma:corol-of-Schur}, which implies that $a = b$.
\end{proof}


To complete the proof of Theorem~1.1, we find sharp estimates of $\lnorm \quoz_{0,\epsilon} \rnorm_\infty$ and $ \lnorm \quoz_{1,\epsilon} \rnorm_\infty $.
For this purpose we shall use the following technical lemma for the gamma function, in which we denote by $S$ the set $\lset z\in \C : \Re(z) \in [-\half,1] \rset$.

\begin{lemma} \label{Gamma}
There exist constants $C$ and $C'$ such that
\[
C \leq \frac{\labs(x+iy) \fn\Gamma(x + iy)\rabs}{e^{-\half \pi \labs y \rabs} (1 + \labs y \rabs)^{ x + \half}} \leq C'
\quad\forall x+iy \in S.
\]
\end{lemma}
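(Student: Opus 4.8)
The plan is to reduce everything to a single standard asymptotic input and a compactness argument. The key fact I would invoke is the classical Stirling asymptotic for the gamma function along vertical lines: for fixed real $x$,
\[
|\Gamma(x+iy)| \sim \sqrt{2\pi}\, |y|^{x-\frac12}\, e^{-\frac12\pi|y|}
\qquad\text{as } |y|\to\infty,
\]
uniformly for $x$ in any compact interval (this is \cite[p.~3 or the asymptotics section]{Erd}, or can be derived from $\Gamma(x+iy)\overline{\Gamma(x+iy)} = \Gamma(x+iy)\Gamma(x-iy)$ together with the reflection/duplication formulae). Multiplying by $|x+iy| \sim |y|$ as $|y|\to\infty$ (with $x$ bounded), the numerator $|(x+iy)\Gamma(x+iy)|$ behaves like $\sqrt{2\pi}\,|y|^{x+\frac12}e^{-\frac12\pi|y|}$, which is exactly $\sqrt{2\pi}$ times the denominator, up to the difference between $|y|^{x+\frac12}$ and $(1+|y|)^{x+\frac12}$; but that ratio tends to $1$ as $|y|\to\infty$, uniformly for $x\in[-\frac12,1]$. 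Hence the displayed quotient tends to $\sqrt{2\pi}$ as $|y|\to\infty$, uniformly on the strip.

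First I would make this uniformity precise: fix $R>0$ large and show that on the part of $S$ with $|y|\ge R$ the quotient lies in, say, $(\sqrt{2\pi}/2,\, 2\sqrt{2\pi})$; the uniformity over $x\in[-\frac12,1]$ is what the Stirling estimate in \cite{Erd} already provides, so this is just a matter of quoting it carefully and absorbing the $|y|^{x+\frac12}$ versus $(1+|y|)^{x+\frac12}$ discrepancy and the $|x+iy|$ versus $|y|$ discrepancy. Next, I would handle the compact region $S\cap\{|y|\le R\}$: here both numerator and denominator are continuous and strictly positive — the denominator obviously, and the numerator because $(x+iy)\Gamma(x+iy)$ has no zeros on $S$ (the gamma function is zero-free, and the factor $x+iy$ vanishes only at the origin, where it is cancelled by the pole of $\Gamma$, since $z\Gamma(z)=\Gamma(z+1)\to\Gamma(1)=1$ as $z\to0$). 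So the quotient extends to a continuous strictly positive function on the compact set $S\cap\{|y|\le R\}$, hence is bounded above and below there by positive constants. Combining the two regions and taking $C$ to be the minimum and $C'$ the maximum of the two sets of bounds finishes the proof.

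The main obstacle — really the only nontrivial point — is getting the uniformity in $x$ on the vertical strip, together with the clean passage from $\Gamma(x+iy)$ to $(x+iy)\Gamma(x+iy)=\Gamma(x+iy+1)$ so that the possible pole of $\Gamma$ at $z=0$ causes no trouble. The slick way to organize this is to write $|(x+iy)\Gamma(x+iy)| = |\Gamma(x+1+iy)|$ and apply the Stirling estimate on the shifted strip $\Re z\in[\frac12,2]$, which avoids the pole entirely; then one only needs the standard statement that $|\Gamma(\xi+iy)| e^{\frac12\pi|y|}(1+|y|)^{\frac12-\xi}$ is bounded above and below by positive constants uniformly for $(\xi,y)$ with $\xi$ in a fixed compact interval — a routine consequence of the reflection formula $\Gamma(z)\Gamma(1-z)=\pi/\sin(\pi z)$ applied to $|\Gamma|^2$ on vertical lines together with $|\Gamma|$ being continuous and nonvanishing on the relevant compact pieces. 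Everything else is bookkeeping with the elementary inequality $c\,(1+|y|)\le |y|+1 \le |x+iy|+1\le C(1+|y|)$ and $(1+|y|)^{x+\frac12}\simeq |y|^{x+\frac12}$ for $|y|$ bounded away from $0$, with $x$ confined to $[-\frac12,1]$.
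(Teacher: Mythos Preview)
Your proposal is correct and follows essentially the same route as the paper: both use the functional equation $(x+iy)\Gamma(x+iy)=\Gamma(x+1+iy)$ to shift into the pole-free strip $\Re\zeta\in[\tfrac12,2]$ and then invoke Stirling's asymptotic for $\Gamma(\zeta)$ there. The only difference is presentational: the paper computes $|e^{-\zeta}e^{(\zeta-1/2)\ln\zeta}|$ directly from Stirling's expansion (with the $1+O(\zeta^{-1})$ remainder), whereas you quote the derived form $|\Gamma(\xi+iy)|\sim\sqrt{2\pi}\,|y|^{\xi-1/2}e^{-\pi|y|/2}$ and make the compactness argument for bounded $|y|$ explicit.
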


\begin{proof}
Take $x+iy \in S$.
Since $\Gamma(x-iy)=\overline{\Gamma(x+iy)}$,  we may suppose that $y>0$ and note that
\[
1< e^{y\arctan \frac{x+1}{y}}< e^{x+1}.
\]
By using Stirling's asymptotic expansion (see~\cite[p. 47, formula (2)]{Erd}),
\[
\Gamma(\zeta)=e^{-\zeta}\, e^{(\zeta-\half)\,\ln \zeta}\,\sqrt{2\pi} \left( 1+O(\zeta^{-1}) \right)
\qquad\zeta\to\infty,
\]
where $\zeta=x+1+iy$, we obtain
\[
\begin{aligned}
\left|
e^{-\zeta}\, e^{(\zeta-\frac12)\,\ln \zeta}
\right|
&=
\left|
e^{-(x+1)}\, e^{(x+\frac12)\,\ln |\zeta|-y\arctan \frac{y}{x+1}}\,
\right|
\\
&
=
\left|
e^{-(x+1)}\, |\zeta|^{x+\frac12}\,e^{ -\frac{\pi}2 y+y\arctan \frac{x+1}{y}}\,
\right|
\\
&\simeq
(1+y)^{x+\frac12}\,e^{ -\frac{\pi}2 y}.
\end{aligned}
\]
The result follows observing that $\Gamma(\zeta)=(x+iy) \fn\Gamma(x + iy)$.
\end{proof}

%
%

\begin{lemma}\label{stimen01}
Suppose that $\la =\Rel+i\Iml$, where $\Iml\not= 0$ and  $-\half <\Rel<\half$.
Then, with the constants $C$ and $C'$ of Lemma~\ref{Gamma},
\footnoteomit{
\[
\begin{gathered}
\left(\frac{C}{C'}\right)^2\,
\labs
\frac{\half-|\Rel|+2i\Iml}{\half+|\Rel|+2i\Iml}
\rabs
\,\frac{\half+|\Rel|}{\frac12-|\Rel |}
\leq \,
\frac{\| \quoz_{0,0,\lambda}\|_\infty}{\left(1+|\Iml |\right)^{|\Rel |}}
\,\leq
 \left( \frac{C'}{C}\right)^2\,\frac{1}{\frac12-|\Rel |}
;
\\
\left(\frac{C}{C'}\right)^2\,
\leq
(1+|\Iml|)^{-|\Rel|}\,\| \quoz_{1,0,\lambda}\|_\infty
\leq
2\, \left( \frac{C'}{C}\right)^2\,
;
\\
\left(\frac{C}{C'}\right)^2\,
\labs\frac{\half-\Rel+2i\Iml}{1+2i\Iml}\rabs
\frac{\frac14}{\half+\Rel}
\leq
(1+|\Iml|)^{-|\Rel|}\, \| \quoz_{0,1,\lambda}\|_\infty
\leq
2\, \left( \frac{C'}{C}\right)^2\,
\frac{1}{\half+\Rel};
\\
\left(\frac{C}{C'}\right)^2\,
\labs\frac{\half+\Rel+2i\Iml}{1+2i\Iml}\rabs
\frac{\frac14}{\half-\Rel}
\leq
(1+|\Iml|)^{-|\Rel|}\, \| \quoz_{1,1,\lambda}\|_\infty
\leq
2\, \left( \frac{C'}{C}\right)^2\,
\frac1{\half-\Rel}.
\end{gathered}
\]
}
\[
\begin{aligned}
\left(\frac{C}{C'}\right)^2\,
\labs
\frac{\half-|\Rel|+2i\Iml}{\half+|\Rel|+2i\Iml}
\rabs
\,\frac{\half+|\Rel|}{\frac12-|\Rel |}
\leq \,
&\frac{\| \quoz_{0,0,\lambda}\|_\infty}{\left(1+|\Iml |\right)^{|\Rel |}}
\,\leq
 \left( \frac{C'}{C}\right)^2\,\frac{1}{\frac12-|\Rel |}
\\ \\
\left(\frac{C}{C'}\right)^2\,
\leq \,
&\frac{\|\quoz_{1,0,\lambda}\|_\infty}{\left(1+|\Iml |\right)^{|\Rel |}}
\,\leq
2\, \left( \frac{C'}{C}\right)^2\,
\\ \\
\left(\frac{C}{C'}\right)^2\,
\labs\frac{\half-\Rel+2i\Iml}{1+2i\Iml}\rabs
\frac{\frac14}{\half+\Rel}
\leq \,
&\frac{\| \quoz_{0,1,\lambda}\|_\infty}{\left(1+|\Iml |\right)^{|\Rel |}}
\,\leq
2\, \left( \frac{C'}{C}\right)^2\,
\frac{1}{\half+\Rel}
\\  \\
\left(\frac{C}{C'}\right)^2\,
\labs\frac{\half+\Rel+2i\Iml}{1+2i\Iml}\rabs
\frac{\frac14}{\half-\Rel}
\leq \,
&\frac{\| \quoz_{1,1,\lambda}\|_\infty}{\left(1+|\Iml |\right)^{|\Rel |}}
\,\leq
2\, \left( \frac{C'}{C}\right)^2\,
\frac1{\half-\Rel}.
\\
\end{aligned}
\]
\footnoteomit{Or estimates of this type:
\[
\left(\frac{C}{C'}\right)^2\,
\frac{|\Iml|}{1+2|\Iml|}
\,\frac{1}{\frac12-|\Rel |}
\leq
\left(1+|\Iml |\right)^{-|\Rel |}\,\| \quoz_{0,0,\lambda}\|_\infty
\leq \left( \frac{C'}{C}\right)^2\,
\frac1{\half-|\Rel|};
\]
and analogous ones for the other norms. 
Estimates hold also when $\Iml=0$, but less significant.
}
\end{lemma}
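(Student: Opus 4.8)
The plan is to reduce the four estimates to sharp pointwise bounds for $\labs m_{0,\Rel}\rabs$ and $\labs m_{1,\Rel}\rabs$, provided by Lemma~\ref{Gamma}, followed by elementary estimation. Each gamma factor in $m_{0,\Rel}$ and $m_{1,\Rel}$ has argument $\tfrac14\pm\tfrac{\Rel+iu}{2}$ or $\tfrac34\pm\tfrac{\Rel+iu}{2}$, whose real part lies in $[0,\half]$ or in $[\half,1]$, hence in $S$; inserting the estimate of Lemma~\ref{Gamma} for each and cancelling the common factors in the quotient gives, with the constants $C,C'$ of that lemma,
\[
\labs m_{0,\Rel}(u)\rabs \simeq \lpar 1+\tfrac{|u|}{2}\rpar^{\Rel}\,\frac{\labs\half-\Rel+iu\rabs}{\labs\half+\Rel+iu\rabs},
\]
and likewise with $\tfrac32\pm\Rel$ replacing $\half\pm\Rel$ for $\labs m_{1,\Rel}(u)\rabs$; since $|\Rel|<\half$, this last ratio lies between two absolute constants, so in effect $\labs m_{1,\Rel}(u)\rabs\simeq(1+\tfrac{|u|}{2})^{\Rel}$. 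As the common prefactor $\sqrt{2\pi}\,2^{\Rel}$ cancels when the quotients $\quoz_{k,\epsilon}$ of Lemma~\ref{moltiplicatore} are formed, each of $\labs\quoz_{0,0}(u)\rabs,\ \labs\quoz_{1,0}(u)\rabs,\ \labs\quoz_{0,1}(u)\rabs,\ \labs\quoz_{1,1}(u)\rabs$ becomes, up to a factor in $[(C/C')^2,(C'/C)^2]$, a product of a growth factor $\bigl((1+|2\Iml-u|/2)/(1+|u|/2)\bigr)^{\Rel}$, a ratio of $\tfrac32$-shifted moduli (bounded above and below uniformly in $\Rel$), and — for $(k,\epsilon)\ne(1,0)$ — a single ``singular'' factor $\labs\half-\Rel+iw\rabs/\labs\half+\Rel+iw\rabs$ or its reciprocal, with $w=u$ or $w=2\Iml-u$.

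Since $\labs m_{k,\Rel}(-u)\rabs=\labs m_{k,\Rel}(u)\rabs$ — the argument of each gamma term passes to its conjugate — one has $\labs\quoz_{k,\epsilon,\Rel+i\Iml}(u)\rabs=\labs\quoz_{k,\epsilon,\Rel-i\Iml}(-u)\rabs$, so we may assume $\Iml>0$. For the upper bounds the two ingredients are the inequality $1+\tfrac{|2\Iml-u|}{2}\le(1+|\Iml|)\lpar 1+\tfrac{|u|}{2}\rpar$, which bounds the growth factor by $(1+|\Iml|)^{|\Rel|}$ whatever the sign of $\Rel$, and the bound $\labs\half+\Rel+iu\rabs/\labs\half-\Rel+iu\rabs\lesssim(\half-\Rel)^{-1}$ for $\Rel>0$ (and its mirror image for $\Rel<0$), proved by treating $|u|\le1$ and $|u|>1$ separately. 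Reading off the four cases: for $\quoz_{1,0}$ there is no singular factor, so $\lnorm\quoz_{1,0}\rnorm_\infty\lesssim(1+|\Iml|)^{|\Rel|}$; for $\quoz_{0,0}$ the singular factor contributes $(\half-|\Rel|)^{-1}$; for $\quoz_{0,1}$ it is present only when $\Rel<0$, contributing $(\half+\Rel)^{-1}$; and for $\quoz_{1,1}$ only when $\Rel>0$, contributing $(\half-\Rel)^{-1}$. Together with the growth factor this is exactly the stated upper bound in each case.

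For the lower bounds I would evaluate $\labs\quoz_{k,\epsilon}(u)\rabs$ at the distinguished points $u=0$ and $u=2\Iml$: there the growth factor equals $(1+|\Iml|)^{\pm\Rel}$ exactly, the $\tfrac32$-factor lies between fixed constants, and the singular factor reduces either to its real-axis value $(\half\pm\Rel)/(\half\mp\Rel)$ or to the modulus $\labs\half-\Rel+2i\Iml\rabs/\labs\half+\Rel+2i\Iml\rabs$. Choosing $u=0$ when $\Rel\ge0$ and $u=2\Iml$ when $\Rel<0$ produces exactly the displayed lower bounds: for $\quoz_{0,0}$ the factors so obtained already coincide with the terms in the statement, while for $\quoz_{0,1}$ and $\quoz_{1,1}$ one passes between $\labs\half\pm\Rel+2i\Iml\rabs^{-1}$ and $\labs1+2i\Iml\rabs^{-1}(\half\pm\Rel)^{-1}$ using $\labs\half\pm\Rel+2i\Iml\rabs\le\labs1+2i\Iml\rabs$ together with $\labs1+2i\Iml\rabs\ge\tfrac1{\sqrt2}(1+2|\Iml|)$; for $\quoz_{1,0}$ one uses, in the same vein, that $\labs a+iy\rabs/\labs b+iy\rabs\ge a/b$ for $0<a\le b$.

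The analytic substance is entirely in Lemma~\ref{Gamma}; everything afterward is elementary but exacting in its bookkeeping, and that is where the main obstacle lies. One must carry the constants $C,C'$ through every quotient, keep the $\tfrac32$-shifted moduli uniformly under control as $\Rel\to\pm\half$, and — the real point — identify correctly, case by case and according to the sign of $\Rel$, which of the two factors $\labs\half\mp\Rel+iw\rabs$ is the small one, since that alone decides whether the norm grows like $(\half-\Rel)^{-1}$, like $(\half+\Rel)^{-1}$, like $(\half-|\Rel|)^{-1}$, or not at all.
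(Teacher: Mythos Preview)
Your approach is essentially the same as the paper's: both insert the two-sided estimate of Lemma~\ref{Gamma} into each gamma factor of the quotients $\quoz_{k,\epsilon}$, separate off a ``growth'' factor $\bigl((1+|2\Iml-u|/2)/(1+|u|/2)\bigr)^{\Rel}$ bounded by $(1+|\Iml|)^{|\Rel|}$ and a rational factor $\labs\half\mp\Rel+iw\rabs/\labs\half\pm\Rel+iw\rabs$ whose supremum gives the $(\half-|\Rel|)^{-1}$ behaviour, and then obtain the lower bounds by evaluating at $u=0$ or $u=2\Iml$. The paper in fact writes out only the case $\quoz_{0,0}$ and leaves the remaining three ``to the reader'', so your more explicit case-by-case bookkeeping is, if anything, slightly more detailed than what appears there.
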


\begin{proof}
By Lemma~\ref{Gamma},
\begin{align*}
\|\quoz_{0,0,\Rel+i\Iml}\|_\infty
&=\sup_{u\in \R}
\left|
\frac{
\Gamma
\left(\frac14+
\frac{\Rel+i(2\Iml-u)}{2}
\right)\,
\Gamma
\left(\frac14-
\frac{\Rel+iu}{2}
\right)
}
{
\Gamma
\left(\frac14-
\frac{\Rel+i(2\Iml-u)}{2}
\right)\,
\Gamma
\left(\frac14+
\frac{\Rel+iu}{2}
\right)
}
\right|
\\
&\leq
\left(\frac{C'}{C}\right)^2\,
\sup_{u\in \R}
\left|
\frac{1+|\Iml-u|}{1+|u|}
\right|^{\Rel}
\cdot\quad
\sup_{u\in \R}
\left|
\frac{\frac14-
\frac{\Rel}{2}-i(\Iml-u)}
{\frac14+
\frac{\Rel}{2}+i(\Iml-u)}
\right|
\left|
\frac{\frac14+
\frac{\Rel}{2}+iu}
{\frac14-
\frac{\Rel}{2}-iu}
\right|
\\
&
\leq
\left(\frac{C'}{C}\right)^2\,
(1+|\Iml|)^{|\Rel|}\,
\left(\sup_{u\in \R}
 \frac
 {\big(\frac14-\frac{\Rel}2\big)^2+(\Iml-u)^2}
 {\big(\frac14+\frac{\Rel}2\big)^2+(\Iml-u)^2}
 \cdot
 \sup_{u\in \R}\frac
 {\big(\frac14+\frac{\Rel}2\big)^2+u^2}
 {\big(\frac14-\frac{\Rel}2\big)^2+u^2}
\right)^{1/2}
\\
&=
\left(\frac{C'}{C}\right)^2\,
(1+|\Iml|)^{|\Rel|}\, \frac1{\half-|\Rel|} .
\end{align*}

On the other hand,
\begin{align*}
\|\quoz_{0,0,\Rel+i\Iml}\|_\infty
&\geq
\max\{| \quoz_{0,0,\Rel+i\Iml}(0) |, | \quoz_{0,0,\Rel+i\Iml}(2\Iml) |\}
\\
&\geq \labs
\frac{\Gamma\left(  \frac14 +\frac{|\Rel|}2  +i\Iml  \right)}
{\Gamma\left(  \frac14 -\frac{|\Rel|}2  -i\Iml  \right)}
\cdot
\frac{\Gamma\left(  \frac14 -\frac{|\Rel|}2    \right)}
{\Gamma\left(  \frac14 +\frac{|\Rel|}2    \right)}
\rabs
\\
&\geq \left(\frac{C}{C'}\right)^2\, \left(1+|\Iml |\right)^{|\Rel|}\,
\labs
\frac
{
  \frac14 -\frac{|\Rel|}2+i\Iml
  }
{   \frac14 +\frac{|\Rel|}2+i\Iml
}
\rabs
\cdot
\labs
\frac{\frac14 +\frac{|\Rel|}2}
{\frac14 -\frac{|\Rel|}2 }\rabs ,
\end{align*}
concluding the proof of the estimate for $\|\quoz_{0,0,\Rel+i\Iml}\|_\infty$.

The other computations are similar and we leave the details to the reader.
\footnoteomit{
%
%
%

Reasoning as before we obtain
\begin{align*}
\|\quoz_{1,0,\Rel+i\Iml}\|_\infty
&=\sup_{u\in \R}
\left|
\frac{
\Gamma
\left(\frac34+
\frac{\Rel+i(u+2\Iml)}{2}
\right)\,
\Gamma
\left(\frac34-
\frac{\Rel+iu}{2}
\right)
}
{
\Gamma
\left(\frac34-
\frac{\Rel+i(u+2\Iml)}{2}
\right)\,
\Gamma
\left(\frac34+
\frac{\Rel+iu}{2}
\right)
}
\right|
\\
&\leq
\left(\frac{C'}{C}\right)^2\,
\sup_{u\in \R}
\left|
\frac{1+|u+\Iml|}{1+|u|}
\right|^{\Rel}
\cdot\quad
\sup_{u\in \R}
\left|
\frac{\frac34-
\frac{\Rel}{2}+i(u+\Iml)}
{\frac34+
\frac{\Rel}{2}+i(u+\Iml)}
\right|
\left|
\frac{\frac34+
\frac{\Rel}{2}+iu}
{\frac34-
\frac{\Rel}{2}+iu}
\right|
\\
&
=\left(\frac{C'}{C}\right)^2\,
(1+|\Iml|)^{|\Rel|}\,
\cdot\quad
\sup_{u\in \R}
\left(
 \frac
 {\big(\frac34-\frac{\Rel}2\big)^2+(u+\Iml)^2}
 {\big(\frac34+\frac{\Rel}2\big)^2+(u+\Iml)^2}
 \cdot
 \frac
 {\big(\frac34+\frac{\Rel}2\big)^2+u^2}
 {\big(\frac34-\frac{\Rel}2\big)^2+u^2}
\right)^{1/2}
\\
&\leq 2
\left(\frac{C'}{C}\right)^2\,
(1+|\Iml|)^{|\Rel|}\,
\end{align*}
and
\begin{align*}
\|\quoz_{1,0,\Rel+i\Iml}\|_\infty
&\geq
\max\{| \quoz_{1,0,\Rel+i\Iml}(0) |, | \quoz_{1,0,\Rel+i\Iml}(-2\Iml) |\}
\geq
\labs
\frac{\Gamma\left(  \frac34 +\frac{|\Rel|}2  +i\Iml  \right)}
{\Gamma\left(  \frac34 -\frac{|\Rel|}2  -i\Iml  \right)}
\cdot
\frac{\Gamma\left(  \frac34 -\frac{|\Rel|}2    \right)}
{\Gamma\left(  \frac34 +\frac{|\Rel|}2    \right)}
\rabs
\\
&\geq
\left(\frac{C}{C'}\right)^2\, \left(1+|\Iml |\right)^{|\Rel|}\,
\labs
\frac
{
  \frac34 -\frac{|\Rel|}2+i\Iml
  }
{   \frac34 +\frac{|\Rel|}2+i\Iml
}
\rabs
\cdot
\labs
\frac{\frac34 +\frac{|\Rel|}2}
{\frac34 -\frac{|\Rel|}2 }\rabs
\\
&\geq
\left(\frac{C}{C'}\right)^2\, \left(1+|\Iml |\right)^{|\Rel|}\,.
\end{align*}


In the case where $\epsilon=1$,
\begin{align*}
\|\quoz_{0,1,\Rel+i\Iml}\|_\infty
&=\sup_{u\in \R}
\left|
\frac{
\Gamma
\left(\frac14+
\frac{\Rel+i(u+2\Iml)}{2}
\right)\,
\Gamma
\left(\frac34-
\frac{\Rel+iu}{2}
\right)
}
{
\Gamma
\left(\frac14-
\frac{\Rel+i(u+2\Iml)}{2}
\right)\,
\Gamma
\left(\frac34+
\frac{\Rel+iu}{2}
\right)
}
\right|
\\
&\leq
\left(\frac{C'}{C}\right)^2\,
\sup_{u\in \R}
\left|
\frac{1+|u+\Iml|}{1+|u|}
\right|^{\Rel}
\cdot\quad
\sup_{u\in \R}
\left|
\frac{\frac14-
\frac{\Rel}{2}+i(u+\Iml)}
{\frac14+
\frac{\Rel}{2}+i(u+\Iml)}
\right|
\left|
\frac{\frac34+
\frac{\Rel}{2}+iu}
{\frac34-
\frac{\Rel}{2}+iu}
\right|
 .
\end{align*}
Therefore, if  $0<\Rel<\half$
\[
\|\quoz_{0,1,\Rel+i\Iml}\|_\infty
\leq
\left(\frac{C'}{C}\right)^2\,
(1+|\Iml|)^{|\Rel|}\,
\left|
\frac{\frac34+
\frac{\Rel}{2} }
{\frac34-
\frac{\Rel}{2}}
\right|
\leq  2\,
\left(\frac{C'}{C}\right)^2\,
(1+|\Iml|)^{|\Rel|}\,
\]
and
\[
\begin{aligned}
\|\quoz_{0,1,\Rel+i\Iml}\|_\infty &
\geq |\quoz_{0,1,\Rel+i\Iml}(0)|
=\left|
\frac{
\Gamma
\left(\frac14+
\frac{\Rel}{2}+i\Iml
\right)\,
\Gamma
\left(\frac34-
\frac{\Rel}{2}
\right)
}
{
\Gamma
\left(\frac14-
\frac{\Rel}{2}+i \Iml
\right)\,
\Gamma
\left(\frac34+
\frac{\Rel}{2}
\right)
}
\right|
\\
&\geq
\left(\frac{C}{C'}\right)^2\,
(1+|\Iml|)^{|\Rel|}\, \labs\frac{\half-\Rel+2i\Iml}{\half+\Rel+2i\Iml}\rabs
\\
&\geq
\left(\frac{C}{C'}\right)^2\,
(1+|\Iml|)^{|\Rel|}\, \labs\frac{\half-\Rel+2i\Iml}{1+2i\Iml}\rabs
.
\end{aligned}
\]
If $-\half<\Rel <0$,
\]
\|\quoz_{0,1,\Rel+i\Iml}\|_\infty
\leq
\left(\frac{C'}{C}\right)^2\,
(1+|\Iml|)^{|\Rel|}\,
\left|
\frac{\frac14-
\frac{\Rel}{2} }
{\frac14+
\frac{\Rel}{2}}
\right|
\leq
\left(\frac{C'}{C}\right)^2\,
(1+|\Iml|)^{|\Rel|}\,
\frac1{{\half+\Rel }}
\[
and
\[
\begin{aligned}
\|\quoz_{0,1,\Rel+i\Iml}\|_\infty
&\geq |\quoz_{0,1,\Rel+i\Iml}(-2\Iml)|
=\left|
\frac{
\Gamma
\left(\frac14+
\frac{\Rel}{2}
\right)\,
\Gamma
\left(\frac34-
\frac{\Rel}{2}+i\Iml
\right)
}
{
\Gamma
\left(\frac14-
\frac{\Rel}{2}
\right)\,
\Gamma
\left(\frac34+
\frac{\Rel}{2}+i \Iml
\right)
}
\right|
\\
&\geq
\left(\frac{C}{C'}\right)^2\,
(1+|\Iml|)^{|\Rel|}\,
\left|
\frac{\frac34+
\frac{\Rel}{2}+i\Iml}
{\frac34-
\frac{\Rel}{2}+i\Iml}
\right|
\left|
\frac{\frac14-
\frac{\Rel}{2}}
{\frac14+
\frac{\Rel}{2}}
\right|
\\
&\geq
\left(\frac{C}{C'}\right)^2\,
(1+|\Iml|)^{|\Rel|}\,
\frac{\frac14}
{\frac12+
{\Rel}}.
\end{aligned}
\]
Finally,
\[
\begin{aligned}
\|\quoz_{1,1,\Rel+i\Iml}\|_\infty
&=
\sup_{u\in \R}
\labs
\frac{
\Gamma(\frac34+\frac{\Rel+i(u+2\Iml)}{2})}{\Gamma(\frac34-\frac{\Rel+i(u+2\Iml)}{2})
}
\frac{\Gamma(\frac14-\frac{\Rel+iu}{2})}{\Gamma(\frac14+\frac{\Rel+iu}{2})}
\rabs
\\
&\leq
\left(\frac{C'}{C}\right)^2
\sup_{u\in \R}
\left|
\frac{1+|u+\Iml|}{1+|u|}
\right|^{\Rel}
\cdot\quad
\sup_{u\in \R}
\left|
\frac{\frac34-
\frac{\Rel}{2}+i(u+\Iml)}
{\frac34+
\frac{\Rel}{2}+i(u+\Iml)}
\right|
\left|
\frac{\frac14+
\frac{\Rel}{2}+iu}
{\frac14-
\frac{\Rel}{2}+iu}
\right|
 .
\end{aligned}
\]
Therefore when $0<\Rel<\half$
\[
\|\quoz_{1,1,\Rel+i\Iml}\|_\infty
\leq
\left(\frac{C'}{C}\right)^2
\left(
1+|\Iml|
\right)^{|\Rel|}\,
\frac{1}{\half-\Rel}
\]
and
\[
\begin{aligned}
\|\quoz_{1,1,\Rel+i\Iml}\|_\infty
&\geq |\quoz_{1,1,\Rel+i\Iml}(0)| =
\labs
\frac{
\Gamma(\frac34+\frac{\Rel}{2}+i\Iml)}{\Gamma(\frac34-\frac{\Rel}{2}-i\Iml)
}
\frac{\Gamma(\frac14-\frac{\Rel}{2})}{\Gamma(\frac14+\frac{\Rel}{2})}
\rabs
\\
&\geq
\left(\frac{C}{C'}\right)^2
\left(
1+|\Iml|
\right)^{|\Rel|}\,
\labs
\frac{
\frac34-\frac{\Rel}{2}+i\Iml}{\frac34+\frac{\Rel}{2}+i\Iml
}\rabs\labs
\frac{\frac14+\frac{\Rel}{2}}{\frac14-\frac{\Rel}{2}}
\rabs
\\
&\geq
\left(\frac{C}{C'}\right)^2
\left(
1+|\Iml|
\right)^{|\Rel|}\,\frac{\frac14}{\half-\Rel}.
\end{aligned}
\]
When $-\half<\Rel<0$,
\[
\|\quoz_{1,1,\Rel+i\Iml}\|_\infty
\leq
\left(\frac{C'}{C}\right)^2
\left(
1+|\Iml|
\right)^{|\Rel|}\,
\frac{\frac34-\frac{\Rel}{2}}{\frac34+\frac{\Rel}{2}}
\leq 2
\left(\frac{C'}{C}\right)^2
\left(
1+|\Iml|
\right)^{|\Rel|}
\]
and
\[
\begin{aligned}
\|\quoz_{1,1,\Rel+i\Iml}\|_\infty
&\geq |\quoz_{1,1,\Rel+i\Iml}(-2\Iml)| =
\labs
\frac{
\Gamma(\frac34+\frac{\Rel}{2})}{\Gamma(\frac34-\frac{\Rel}{2})
}
\frac{\Gamma(\frac14-\frac{\Rel}{2}+i\Iml)}{\Gamma(\frac14+\frac{\Rel}{2}+i\Iml)}
\rabs
\\
&\geq
\left(\frac{C}{C'}\right)^2
\left(
1+|\Iml|
\right)^{|\Rel|}\,
\labs
\frac{
\frac34-\frac{\Rel}{2}}{\frac34+\frac{\Rel}{2}
}\rabs\labs
\frac{\frac14+\frac{\Rel}{2}+i\Iml}{\frac14-\frac{\Rel}{2}+i\Iml}
\rabs
\\
&\geq
\left(\frac{C}{C'}\right)^2
\left(
1+|\Iml|
\right)^{|\Rel|}\,\labs
\frac{\frac12+\Rel+2i\Iml}{1+2i\Iml}
\rabs.
\end{aligned}
\]
This concludes the proof.
}
\end{proof}

\begin{corollary}\label{stimaKS}
Suppose that $\la =\Rel+i\Iml$, where $-\half <\Rel<\half$.
Then when $\Iml$ is large,
\[
\lnorm \pi_{\la,\epsilon}\rnorm_{\ub}
\simeq \frac{(1+|\Iml|)^{|\Rel|}}{\half-|\Rel |}.
\]
\end{corollary}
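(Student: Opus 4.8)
The plan is to read this off from the exact value $\lnorm \pi_{\la,\epsilon}\rnorm_{\ub} = \max\{\lnorm \quoz_{0,\epsilon}\rnorm_\infty, \lnorm \quoz_{1,\epsilon}\rnorm_\infty\}$ supplied by Corollary~\ref{minimo}, feeding in the four two-sided estimates of Lemma~\ref{stimen01}; the argument is then pure bookkeeping with inequalities already in hand. Since only large $|\Iml|$ is at issue, I would fix the threshold $|\Iml| \geq 1$ throughout, which is precisely the range in which the $\Iml$-dependent ``remainder factors'' of Lemma~\ref{stimen01} are harmless.

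First I would record two elementary facts, uniform in $\la$ in the strip. (i)~Each factor occurring in Lemma~\ref{stimen01} of the form $\labs\frac{\half \pm |\Rel| + 2i\Iml}{\half \mp |\Rel| + 2i\Iml}\rabs$ or $\labs\frac{\half \pm \Rel + 2i\Iml}{1 + 2i\Iml}\rabs$ has square equal to a ratio $\frac{c^2 + 4\Iml^2}{d^2 + 4\Iml^2}$ with $0 \le c, d < 1$, hence for $|\Iml|\geq 1$ lies in $[\frac45, \frac54]$; so each such factor is $\simeq 1$. (ii)~One has $\half + |\Rel| \in [\half, 1)$, and, by splitting on the sign of $\Rel$, $\min\{\half + \Rel, \half - \Rel\} = \half - |\Rel|$, so that $\max\{\frac1{\half+\Rel}, \frac1{\half-\Rel}\} = \frac1{\half-|\Rel|}$ and $\frac1{\half\pm\Rel} \le \frac1{\half-|\Rel|}$ for both signs.

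For $\epsilon = 0$, Lemma~\ref{stimen01} together with (i)--(ii) gives $\lnorm \quoz_{0,0,\la}\rnorm_\infty \simeq \frac{(1+|\Iml|)^{|\Rel|}}{\half-|\Rel|}$ and $\lnorm \quoz_{1,0,\la}\rnorm_\infty \simeq (1+|\Iml|)^{|\Rel|}$, with constants depending only on the $C, C'$ of Lemma~\ref{Gamma}; since $\frac1{\half-|\Rel|}\geq 2$, the first dominates the second up to a constant and the maximum is $\simeq \frac{(1+|\Iml|)^{|\Rel|}}{\half-|\Rel|}$. For $\epsilon = 1$, Lemma~\ref{stimen01} and (i) give $\lnorm \quoz_{0,1,\la}\rnorm_\infty \simeq \frac{(1+|\Iml|)^{|\Rel|}}{\half+\Rel}$ and $\lnorm \quoz_{1,1,\la}\rnorm_\infty \simeq \frac{(1+|\Iml|)^{|\Rel|}}{\half-\Rel}$, and taking the maximum and applying (ii) again gives $\frac{(1+|\Iml|)^{|\Rel|}}{\half-|\Rel|}$. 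In both cases Corollary~\ref{minimo} then yields the claim.

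The only delicate point — and the nearest thing to an obstacle — is uniformity: the implied constants in Lemma~\ref{stimen01} depend on $C$ and $C'$ alone, and fact~(i) is exactly what prevents the $\Iml$-dependent modulus factors from spoiling the two-sided comparison, so the final $\simeq$ holds uniformly over the whole half-strip $\{|\Rel| < \half,\ |\Iml| \geq 1\}$. This also explains why one must keep $|\Iml|$ bounded away from $0$: at $\Iml = 0$ with $\epsilon = 0$ the representation $\pi_{\Rel,0}$ is a complementary series representation, hence unitary with $\ub$-norm~$1$, whereas $\frac{(1+|\Iml|)^{|\Rel|}}{\half-|\Rel|}\to\infty$ as $|\Rel|\to\half$, and correspondingly the factor $\labs\frac{\half-|\Rel|+2i\Iml}{\half+|\Rel|+2i\Iml}\rabs$ in Lemma~\ref{stimen01} degenerates for small $\Iml$.
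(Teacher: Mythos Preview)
Your proposal is correct and follows exactly the approach the paper takes: invoke Corollary~\ref{minimo} to reduce to $\max\{\lnorm \quoz_{0,\epsilon}\rnorm_\infty,\lnorm \quoz_{1,\epsilon}\rnorm_\infty\}$ and then feed in the bounds of Lemma~\ref{stimen01}. The paper's own proof is two lines and simply says ``our estimates follow easily''; you have supplied the bookkeeping that those two lines suppress, including the observation that the residual modulus factors in Lemma~\ref{stimen01} are $\simeq 1$ once $|\Iml|$ is bounded away from~$0$.
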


\begin{proof}
By Corollary~\ref{minimo},
\[
\lnorm \pi_{\la,\epsilon}\rnorm_{\ub}=\max\{ \lnorm \quoz_{0,\epsilon} \rnorm_\infty, \lnorm \quoz_{1,\epsilon} \rnorm_\infty \},
\]
and our estimates follow easily.
\end{proof}

We conclude with the observation that some parts of the proof of Theorem~1.1 generalize easily to $SO(n,1)$, with $n>2$, and other parts with more difficulty.
We believe that everything may be extended, with different formulae, and plan to return to this soon.

\end{document}